\newcommand\mathcircled[1]{%
  \mathpalette\@mathcircled{#1}%
}
\newcommand\@mathcircled[2]{%
  \tikz[baseline=(math.base)] \node[draw,circle,inner sep=1pt] (math) {$\m@th#1#2$};%
}
\numberwithin{equation}{section}
\numberwithin{table}{section}
\numberwithin{figure}{section}
\theoremstyle{plain}
\newtheorem{theorem}{Theorem}[section]
\newtheorem{lemma}[theorem]{Lemma}
\newtheorem{proposition}[theorem]{Proposition}
\newtheorem{corollary}[theorem]{Corollary}
\theoremstyle{definition}
\newtheorem{remark}[theorem]{Remark}
\numberwithin{theorem}{section}
\newcommand{\ga}{\mathfrak{a}}
\newskip\aline \newskip\halfaline
\def\skipaline{\vskip\aline}
\def\qedbox{$\rlap{$\sqcap$}\sqcup$}
\def\qed{\nobreak\hfill\penalty250 \hbox{}\nobreak\hfill\qedbox\skipaline}
\newcommand{\cond}{\,\Vert\,}
\newcommand*\xbar[1]{%
   \hbox{%
     \vbox{%
       \hrule height 0.5pt 
       \kern0.5ex
       \hbox{%
         \kern-0.1em
         \ensuremath{#1}%
         \kern-0.1em
       }%
     }%
   }%
}
\DeclareFontFamily{OT1}{pzc}{}
\DeclareFontShape{OT1}{pzc}{m}{it}{<-> s * [1.10] pzcmi7t}{}
\DeclareMathAlphabet{\mathpzc}{OT1}{pzc}{m}{it}
\newcommand{\one}{{\mathbbm{1}}}
\newcommand\bE{{\mathbb E}}
\newcommand{\bN}{{{\mathbb N}}}
\newcommand{\bP}{{{\mathbb P}}}
\newcommand\bR{{\mathbb R}}
\newcommand{\bV}{{{\mathbb V}}}
\newcommand\bZ{{\mathbb Z}}
\newcommand{\ba}{{\boldsymbol{a}}}
\newcommand{\be}{{\boldsymbol{e}}}
\newcommand{\ii}{\boldsymbol{i}}
\newcommand{\bm}{\boldsymbol{m}}
\newcommand{\bu}{{\boldsymbol{u}}}
\newcommand{\bv}{{\boldsymbol{v}}}
\newcommand{\bx}{{\boldsymbol{x}}}
\newcommand{\by}{{\boldsymbol{y}}}
\newcommand{\bz}{{\boldsymbol{z}}}
\newcommand{\bsI}{\boldsymbol{I}}
\newcommand{\bsK}{{\boldsymbol{K}}}
\newcommand{\bsV}{{\boldsymbol{V}}}
\newcommand{\bsX}{{\boldsymbol{X}}}
\newcommand{\bsY}{{\boldsymbol{Y}}}
\newcommand{\fC}{\mathfrak{C}}
\newcommand{\fM}{\mathfrak{M}}
\newcommand{\bgamma}{{\boldsymbol{\gamma}}}
\newcommand{\bGamma}{\boldsymbol{\Gamma}}
\newcommand{\blam}{{\boldsymbol{\lambda}}}
\newcommand{\bnu}{{\boldsymbol{\nu}}}
\newcommand{\vfi}{{\varphi}}
\newcommand{\eB}{\EuScript{B}}
\newcommand{\eI}{{\EuScript{I}}}
\newcommand{\eK}{\EuScript{K}}
\newcommand{\eL}{\EuScript{L}}
\newcommand{\eM}{\EuScript{M}}
\newcommand{\eN}{\EuScript{N}}
\newcommand{\eS}{\EuScript{S}}
\newcommand{\eV}{\EuScript{V}}
\newcommand{\eX}{{\mathfrak{X}}}
\newcommand{\mV}{\mathscr{V}}
\DeclareMathOperator{\tr}{{\rm tr}}
\DeclareMathOperator{\supp}{{\rm supp}}
\DeclareMathOperator{\vol}{vol}
\DeclareMathOperator{\Sym}{\mathbf{Sym}}
\DeclareMathOperator{\Hess}{Hess}
\DeclareMathOperator{\var}{Var}
\DeclareMathOperator{\cov}{Cov}
\DeclareMathOperator{\Var}{Var}
\DeclareMathOperator{\Prob}{Prob}
\DeclareMathOperator{\Meas}{Meas}
\newcommand{\ra}{\rightarrow}
\newcommand{\Ra}{\Rightarrow}
\newcommand{\Lra}{{\longrightarrow}}
\newcommand{\Llra}{{\Longleftrightarrow}}
\newcommand{\lan}{\langle}
\newcommand{\ran}{\rangle}
\newcommand{\rb}{\,\big]}
\newcommand{\lb}{\big[\,}
\newcommand{\rp}{\,\big)}
\newcommand{\lp}{\big(\,}
\newcommand{\lv}{\big\vert\,}
\newcommand{\rv}{\,\big\vert}
\newcommand{\Lv}{\Big\vert\,}
\newcommand{\Rv}{\,\Big\vert}
\newcommand{\lV}{\big\Vert\,}
\newcommand{\rV}{\,\big\Vert}
\newcommand{\Rp}{\,\Big)}
\newcommand{\Lp}{\Big(\,}
\def\inpr{\mathbin{\hbox to 6pt{\vrule height0.4pt width5pt depth0pt \kern-.4pt \vrule height6pt width0.4pt depth0pt\hss}}}
\newcommand{\pa}{\partial}
\newcommand{\hPhi}{{\widehat{\Phi}}}
\newcommand{\hphi}{{\widehat{\Phi}}}
\newcommand{\hh}{\widehat{H}}
\newcommand{\hrho}{\widehat{\rho}}
\def\rbinom#1#2{\ensuremath{\left(\kern-.3em\left(\genfrac{}{}{0pt}{}{#1}{#2}\right)\kern-.3em\right)}}
\newcommand{\as}{\mathrm{a.s.}}
\newcommand{\iid}{\mathrm{i.i.d.}}
\newcommand{\cpt}{{\mathrm{cpt}}}
\newcommand{\op}{{\mathrm{op}}}
\newcommand{\tphi}{{\widetilde{\Phi}}}
\newcommand{\trho}{\widetilde{\rho}}
\newcommand{\tH}{\widetilde{H}}
\begin{document}

\title[The distribution of critical points]{A  law of large numbers concerning the distribution of critical points of isotropic Gaussian functions} 

\author{Liviu I. Nicolaescu}
\address{L. Nicolaescu: Department of Mathematics, University of Notre Dame, Notre Dame, IN 46556-4618.}
\email{lnicolae@nd.edu}
\thanks{}

\begin{abstract}   We investigate the distribution of critical points of  certain isotropic random functions $\Phi$ on $\mathbb{R}^m$. We show that  the distribution of critical points of $\Phi(Rx)$, suitably normalized,  converges   a.s.  and $L^2$ to the Lebesgue measure as $R\to\infty$.  We achieve this by producing  precise asymptotics of the second moments of these distributions as $R\to\infty$.  \end{abstract}

\date{ Last revised \today.}
\keywords{isotropic Gaussian functions,  critical points,  Kac-Rice formula,  law of large numbers}

\address{Department of Mathematics, University of Notre Dame, Notre Dame, IN 46556-4618.}
\email{nicolaescu.1@nd.edu}
\urladdr{\url{http://www.nd.edu/~lnicolae/}}

\maketitle

\tableofcontents

\section{Introduction}

Denote by  $\Meas(\bR^m)$ the space of finite Borel measures on $\bR^m$. Suppose that $\ga:\bR\to\bR$ is an even Schwartz function  such that $\ga(0)=1$. We will refer to such functions as \emph{amplitudes}. Consider the  measure $\mu_\ga\in \Meas(\bR^m)$
 \[
 \mu_\ga\lb d\xi\rb =\frac{1}{(2\pi)^m}w_{\ga, m}\lp \xi\rp\blam\lb d\xi\rb,\;\; w_{\ga, m}\lp \xi\rp=\ga \lp |\xi|\rp^2,
 \]
where $\blam$ denotes the Lebesgue measure on $\bR^m$.  

The characteristic function  of  $\mu_\ga$ is the nonnegative definite function
 \begin{equation}\label{iso_sch_1}
\bsK_\ga\lp \bx\rp =\int_{\bR^m} e^{\ii \lan \xi,\bx\ran}\mu_\ga\lb d\xi\rb= \frac{1}{(2\pi)^m}\int_{\bR^m} e^{\ii \lan \xi,\bx\ran}\ga\lp |\xi|\rp^2\blam\lb  d\xi\rb.
 \end{equation}
 Clearly $\bsK_\ga(\bx)$ is an  $O(n)$-invariant, real valued Schwartz function. Then $\bsK_\ga\lp \bx-\by\rp$  is the covariance  kernel of a  real valued, smooth isotropic Gaussian function $\Phi=\Phi_\ga$ on $\bR^m$ with spectral measure $\mu_\ga$. 
 
 For $R>0$ we set 
 \[
 \ga_R(t):= \ga(t/R\rp,\;\;\forall t\in\bR.
 \]
Consider the  finite Borel measure $\mu_\ga^R\in \Meas(\bR^m)$
 \[
 \mu_\ga^R\lb d\xi\rb =\frac{1}{(2\pi)^m}w_{\ba_R, m}\lp \xi\rp\blam\lb d\xi\rb= \frac{1}{(2\pi)^m}\ga\lp \vert \xi\vert/R\rp^2 \blam\lb d\xi\rb.
  \]
Its characteristic function is the nonnegative definite function
 \begin{equation}\label{iso_sch}\begin{split}
 \bsK_\ga^R\lp \bx\rp = \frac{1}{(2\pi)^m}\int_{\bR^m} e^{\ii \lan \xi,\bx\ran}\ga\lp |\xi|/R\rp^2 d\xi =R^m\bsK_\ga\lp R\bx\rp.
 \end{split}
 \end{equation}
We deduce that  $\bsK_\ga^R\lp \bx-\by\rp$  is the covariance kernel of the Gaussian function 
 \[
 \Phi^R_\ga(\bx):=R^{m/2}\Phi_\ga\lp R\bx\rp.
 \]
 The Fourier inversion formula shows that
  \[
 \int_{\bR^m}\bsK_\ga(\bx)d\bx= \ga(0)^2=1.
 \]
 Since $K_\ga(\bx)$ is  $O(m)$-invariant  and smooth,  it has the form $\Psi\lp |\bx|^2\rp $ for some function $\Psi:[0,\infty)\to\bR$. According to Schoenberg's characterization theorem\ \cite[Thm.7.13]{Wen}, the function $\Psi$ must be completely monotone. In particular, $\Psi$  is non-increasing, nonnegative and convex, \cite[Lemma.7.3]{Wen}.  This implies that  the probability  measures $ \bsK^R_\ga\lp \bx\rp dx$ converge weakly to the  Dirac measure  $ \delta_0$. For example, if $\ga(t)=e^{-t^2/4}$, then  
 \[
 \bsK_\ga^R(\bx)=\frac{1}{(2\pi\hbar^{2})^{m/2}}e^{-\frac{\vert \bx\vert^2}{2\hbar^2} },\;\;\hbar=R^{-1}
 \]
 is the probability density the Gaussian measure on $\bR^m$ with variance $\hbar^2\one_m$.
 
 To use a terminology favored by  physicists, 
 \[
  \bsK_\ga^R\lp \bx-\by\rp\to\delta(\bx-\by).
  \]
  In other words, as $R \nearrow \infty$,  the  Gaussian random function $\Phi_\ga^R$   converges  in  distribution  to a Gaussian random  ``function'' $\Phi^\infty$ whose covariance kernel is $\bsK^\infty(\bx-\by)=\delta(\bx-\by)$.  This is  the  Gaussian white noise determined by the Lebesgue measure, \cite{GeVil_4}.

 For every $R>0$ the random function $\Phi^R_\ga$  is  a.s. Morse and there is an associated critical random measure 
 \[
 \fC^R_\ga:=\sum_{\nabla\Phi^R_\ga(\bx)=0}\delta_\bx.
 \]
 Thus, for any Borel subset $S\subset \bR^m$, $\fC^R_\ga[S]$ is the number of critical points of $\Phi^R_\ga$ in $S$.  More generally, if $f: \bR^m\to\bR$ is a continuous compactly supported function we set
 \[
  \fC^R_\ga[f]=\int_{\bR^m} f(\bx)\fC^R_\ga\lb d\bx\rb=\sum_{\nabla\Phi^R_\ga(\bx)=0}f(\bx).
  \]
  Let  $\fC_\ga:=\fC_\ga^{R}\rv_{R=1}$. The main goal  of this paper is to investigate the behavior of $\fC_\ga^R$ in the white noise limit, $R\nearrow \infty$. The  main result is the following.

\begin{theorem}\label{th: var_asy}  Fix an amplitude $\ga\in \eS(\bR)$.  Then the following hold.

\begin{enumerate}

\item There exists a universal  explicit constant $C_m(\ga)>0$ such that for any $f\in C^0_\cpt(\bR^m)$ we have
 \[
 \bE\lb \fC^R_\ga[f]\rb=C_m(\ga)R^m\int_{\bR^m} f(\bx) d\bx..
 \]
\item There exists  a constant $V_m(\ga)\geq 0$, that depends only on $m$ and $\ga$ such that for any $f\in C^0_\cpt(\bR^m)$ 
\[
\var\lb \fC^R_\ga[f]\rb\sim V_m(\ga)R^m\int_{\bR^m} f(\bx)^2 d\bx,\;\;\mbox{as $R\to\infty$}.
\]
\end{enumerate}\qed
\end{theorem}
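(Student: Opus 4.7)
The plan is to derive both parts from the Kac-Rice formulas for the first and second intensities of the critical-point process of $\Phi_\ga$, combined with a scaling reduction. The relation $\Phi_\ga^R(\bx) = R^{m/2}\Phi_\ga(R\bx)$ implies that the critical set of $\Phi_\ga^R$ is the image of that of $\Phi_\ga$ under $\by\mapsto\by/R$; writing $f_R(\by) := f(\by/R)$, we therefore get the identity $\fC_\ga^R[f] = \fC_\ga[f_R]$, which reduces the whole analysis to the $R=1$ process tested against the rescaled window $f_R$.

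For part (i), the Kac-Rice formula for the first factorial moment gives $\bE[\fC_\ga[f_R]] = \int_{\bR^m} \rho_1(\by)\, f_R(\by)\, d\by$, where isotropy forces the one-point intensity $\rho_1$ to be a constant $C_m(\ga)$. This constant is the Gaussian expectation of $|\det\Hess\Phi_\ga(0)|$ conditioned on $\nabla\Phi_\ga(0)=0$, weighted by the Gaussian density of $\nabla\Phi_\ga(0)$ at the origin; all factors are computable from the spectral moments of $\mu_\ga$. The substitution $\bx=\by/R$ then yields $\bE[\fC_\ga^R[f]] = C_m(\ga) R^m \int f$.

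For part (ii), the Kac-Rice formula for the second factorial moment reads
\[
\bE\bigl[\fC_\ga^R[f]^2\bigr] - \bE\bigl[\fC_\ga^R[f^2]\bigr] = \iint_{\bR^m\times\bR^m} f_R(\by) f_R(\by')\, \rho_2(\by-\by')\, d\by\, d\by',
\]
with $\rho_2$ the stationary two-point intensity of $\fC_\ga$. Subtracting $\bE[\fC_\ga^R[f]]^2 = C_m(\ga)^2 R^{2m}\bigl(\int f\bigr)^2$ and setting $U(\bw) := \rho_2(\bw) - C_m(\ga)^2$, the substitutions $\bw=\by'-\by$ and $\bz=\by/R$ give
\[
\var\bigl[\fC_\ga^R[f]\bigr] = C_m(\ga) R^m\!\int f^2 + R^m\!\int_{\bR^m} U(\bw)\left(\int_{\bR^m} f(\bz) f(\bz+\bw/R)\, d\bz\right) d\bw.
\]
For $f\in C^0_\cpt(\bR^m)$ the inner integral is bounded by $\|f\|_\infty^2\, |\supp f|$ uniformly in $R$ and $\bw$, and converges pointwise to $\int f^2$ as $R\to\infty$; provided $U\in L^1(\bR^m)$, dominated convergence yields
\[
\var\bigl[\fC_\ga^R[f]\bigr] \sim R^m \!\int f^2\cdot\Bigl(\,C_m(\ga)+\int_{\bR^m} U(\bw)\, d\bw\,\Bigr),\qquad R\to\infty,
\]
identifying $V_m(\ga)$ as the parenthetical quantity.

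The main obstacle is establishing $U\in L^1(\bR^m)$, and this requires control at two separate scales. Far from the diagonal, $|\bw|\to\infty$, the Schwartz decay of $\bsK_\ga$ and its derivatives (which follows from $\ga\in\eS(\bR)$ via Fourier analysis) forces the joint law of the $2$-jet of $\Phi_\ga$ at $\bx$ and $\by$ to asymptotically factor; a careful expansion of the Kac-Rice integrand then yields rapid decay of $U(\bw)$. Near the diagonal, $|\bw|\to 0$, the joint covariance of $(\nabla\Phi_\ga(\bx), \nabla\Phi_\ga(\by))$ degenerates and the Kac-Rice integrand is singular; a Taylor expansion of $\bsK_\ga$ at the origin is required to extract the precise local behavior of $\rho_2$ and establish local integrability of $U$. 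The Schwartz hypothesis on $\ga$ guarantees that all required spectral moments are finite and that both sets of estimates apply.
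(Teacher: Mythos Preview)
Your outline is correct and matches the paper's strategy almost exactly: reduce via $\fC_\ga^R[f]=\fC_\ga[f_R]$, apply the one-point Kac--Rice formula for~(i), and for~(ii) express the variance as $C_m(\ga)R^m\int f^2$ plus a double integral of $U(\bw)=\rho_2(\bw)-C_m(\ga)^2$ against a rescaled autocorrelation of~$f$, then pass to the limit by dominated convergence once $U\in L^1(\bR^m)$. The paper packages the subtraction slightly differently, writing $C_m(\ga)^2$ as the Kac--Rice density $\trho$ of the decoupled field $\tphi(\bx,\by)=\Phi_\ga(\bx)+\Psi_\ga(\by)$ with $\Psi_\ga$ an independent copy of $\Phi_\ga$; this is equivalent to your $U$, but the advantage is that the far-field estimate becomes a direct comparison of two Kac--Rice densities with nearby Gaussian inputs, handled cleanly by a H\"older-continuity lemma for $A\mapsto\int|\det v|\,\Gamma_A[dv]$ (the paper's Lemma~\ref{lemma: cont_gauss_int}), rather than an ad~hoc expansion. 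For the near-diagonal behavior the paper does not carry out the Taylor analysis you sketch but instead invokes the bound $\sup_{0<|\bw|\le 1}|\bw|^{m-2}\rho_2(\bw)<\infty$ from \cite{BMM22,EL}, which gives local integrability immediately.

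Two technical prerequisites are implicit in your outline but should be checked explicitly. First, the two-point Kac--Rice formula requires that $(\nabla\Phi_\ga(\bx),\nabla\Phi_\ga(\by))$ be a nondegenerate Gaussian vector for every $\bx\neq\by$; the paper proves this in Lemma~\ref{lemma: 2_grad} using the strict positivity of $\ga$ near~$0$. Second, the very formula $\bE[\fC_\ga[f_R]^2]<\infty$ (so that the variance is well defined and the second-moment Kac--Rice identity holds) is not automatic; the paper obtains it from the $J_N$-ampleness of $\Phi_\ga$ (Proposition~\ref{prop: ample}) via the moment-finiteness results of \cite{AnLe23,GS23}. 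With these two points supplied, your argument goes through as written.
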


The case $m=1$   of this  theorem  was proved by  M. Ancona  and T. Letendre \cite[Thm. 1.16]{AnLe20} and  L. Gass \cite[Thm.1.6]{Gass21}.  One immediate application of Theorem \ref{th: var_asy}  is a law of large numbers.

For any positive integer $N$ we denote by $\eL_N$ the random measure
\[
\eL_N:=\frac{1}{N^m}\fC_\ga^N.
\]
Theorem \ref{th: var_asy} shows  for any $f\in C^0_\cpt(\bR^m)$  we have
\[
\var\lb \eL_N[f]\rb \sim const \times N^{-m}\;\;\mbox{as $N\to\infty$}
\]
Since $\sum_{n\geq 1} n^{-m}<\infty$ for $m\geq 2$, we deduce from Borel-Cantelli the following  law of large numbers.

\begin{corollary}\label{cor: wLLN}   Let $m\geq 2$. Then for any $f\in C^0_\cpt(\bR^m)$, 
\begin{equation}\label{equidis}
\lim_{N\to\infty}\eL_N[f]=C_m(\ga)\blam\lb f\rb= C_m(\ga)\int_{\bR^m} f(\bx),\;\;\as\;\;\mbox{and}\;\;L^2,
\end{equation}
where $\blam$ denotes the Lebesgue measure.
\qed
\end{corollary}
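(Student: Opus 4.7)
The plan is to derive both the $L^2$ and the almost sure convergence directly from Theorem \ref{th: var_asy} by standard second moment/Chebyshev arguments, so all the real work has already been done in Theorem \ref{th: var_asy}.

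First I would fix $f\in C^0_\cpt(\bR^m)$ and specialize Theorem \ref{th: var_asy} to $R=N\in\bN$. Item (i), after dividing by $N^m$, gives
\[
\bE\lb\eL_N[f]\rb=\frac{1}{N^m}\bE\lb\fC^N_\ga[f]\rb=C_m(\ga)\int_{\bR^m}f(\bx)\,d\bx,
\]
so $\eL_N[f]$ is already unbiased for the putative limit $C_m(\ga)\blam[f]$. Item (ii) yields
\[
\var\lb\eL_N[f]\rb=\frac{1}{N^{2m}}\var\lb\fC^N_\ga[f]\rb\sim \frac{V_m(\ga)}{N^m}\int_{\bR^m}f(\bx)^2 d\bx,
\]
which tends to $0$ as $N\to\infty$ for every $m\geq 1$. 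Combining these two identities gives
\[
\bE\lb\,\bigl(\eL_N[f]-C_m(\ga)\blam[f]\bigr)^2\,\rb=\var\lb\eL_N[f]\rb\longrightarrow 0,
\]
which is precisely the $L^2$ statement in \eqref{equidis}.

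For the almost sure statement, I would apply Chebyshev: for any $\ve>0$,
\[
\bP\Bigl(\,\bigl\lv\eL_N[f]-C_m(\ga)\blam[f]\bigr\rv>\ve\,\Bigr)\leq \frac{\var\lb\eL_N[f]\rb}{\ve^2}=O\bigl(N^{-m}\bigr).
\]
Here the hypothesis $m\geq 2$ enters decisively: it guarantees $\sum_{N\geq 1}N^{-m}<\infty$, so the Borel--Cantelli lemma implies that a.s.\ only finitely many of the events $\{|\eL_N[f]-C_m(\ga)\blam[f]|>\ve\}$ occur. Since $\ve>0$ is arbitrary, applying this to a sequence $\ve_k\downarrow 0$ and intersecting the corresponding full-measure events yields $\eL_N[f]\to C_m(\ga)\blam[f]$ a.s.

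There is no serious obstacle here beyond Theorem \ref{th: var_asy} itself; the only modest point is the role of the integrality restriction $N\in\bN$ in the Borel--Cantelli step (as opposed to the continuous parameter $R$), but this is harmless because the statement of the corollary is formulated precisely along the integer subsequence $\eL_N$.
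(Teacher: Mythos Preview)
Your argument is correct and is exactly the approach the paper takes: the paper simply notes that Theorem \ref{th: var_asy} gives $\var\lb\eL_N[f]\rb\sim const\cdot N^{-m}$, observes that $\sum_{N\geq 1}N^{-m}<\infty$ for $m\geq 2$, and invokes Borel--Cantelli. Your write-up just spells out the Chebyshev and $L^2$ steps more explicitly than the paper does.
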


As detailed in Appendix \ref{s: ergodic}  the above result can be rephrased as saying  that the random measures $\eL_N$ converge $\as$  and $L^2$ to  the deterministic measure $C_m(\ga)\blam$.    In particular, for any bounded  Borel  set $S$, the random variables $\eL_N[S]$ converge $\as$ to $C_m(\ga)\blam[S]$ a.s.. Thus, in the white noise limit $N\to\infty$, the critical points of $\Phi_\ga^N$ tend to equidistribute with high confidence. 	

For any bounded Borel set $S\subset \bR^m$ we have $\fC_\ga^N[S]=\fC_\ga[N\cdot S]$ and thus Corollary \ref{cor: wLLN} shows that 
\[
\lim_{N\to \infty}\frac{1}{N^m}\fC_\ga[N\cdot S]\to C_m(\ga)\blam[S] \;\; \as\;\;\mbox{and}\;\;L^2.
\]
In \cite{Nico_CLT} we  proved a Central Limit Theorem stating   that when $S=[0,\ell]^m$,  then the random variables
\[
N^{m/2}\Lp  \frac{1}{N^m}\fC_\ga[N\cdot S]-C_m(\ga)\blam[S]\Rp
\]
converge in distribution to a centered normal random variable with positive variance.

The new contribution  of Corollary \ref{cor: wLLN} the  explicit description of the a.s.  limit in (\ref{equidis}). The fact that this limit does indeed exist a.s., in  any dimension  $m$ follows   from the general theory of stationary random measures on $\bR^m$.   Thus Corollary \ref{cor: wLLN} also holds in the case $m=1$. Here are  the details.

For any $f\in C^0_\cpt(\bR^m)$ we have 
\[
\fC^N_\ga[f]=\fC_\ga[f_N],\;\;f_N(\bx):=f\lp N^{-1}\bx\rp.
\]
 Suppose  that $f\in C_\cpt^0(\bR^n)$ is nonnegative and
\[
\blam[f]=\int_{\bR^m} f(\bx) d\bx)>0.
\]   
Then the sequence 
\[
\vfi_N(\bx)=\frac{1}{N^m} f_N(\bx)
\]
is asymptotically stationary  in the precise sense defined in Appendix \ref{s: ergodic}.  The random measure $\fC_\ga$ is stationary and Theorem \ref{th: asy_sta} implies that 
\[
\lim_{N\to \infty}\frac{1}{\lambda\lb f\rb } \fC_\ga^N[f]
\]
 exists  a.s. and  $L^1$ and it is a random variable $\widehat{\fC}_\ga$ independent of $f$. Moreover, we can identify $\widehat{\fC}_{\ga}$ with a measurable function on  the space $\eM$ of locally finite Borel measures on $\bR^m$.  
 
 The distribution of the random measure  $\fC_\ga$ is a Borel probability measure  $\bP_{\fC_\ga}$ on $\eM$.  The additive group $\bR^m$ acts on $\eM$ by translations    and, since $\Phi_\ga$ is stationary, we deduce that $\bP_{\fC_\ga}$ is invariant under the above action of $\bR^m$. 
 
 In Appendix \ref{s: ergodic} we give an alternate ergodic description of $\widehat{\fC}_\ga$.  The fact that $\widehat{\fC}_\ga$  is constant  suggests that the action of $\bR^m$ on $\lp \eM,\bP_{\fC_\ga}\rp $ might be ergodic. There is some circumstantial evidence.
 
 The Gaussian function $\Phi_\ga$  defines a Gaussian measure $\bGamma$ on $C^2(\bR^m)$.  The  additive group $\bR^m$ acts   on  $C^2(\bR^m)$ by translations. Since the Gaussian function $\Phi_\ga$ is stationary,  we deduce that the above action is   $\bGamma$-preserving. Since the spectral measure of $\Phi_\ga$ is absolutely continuous with respect to the Lebesgue  measure, the above action of $\bR^m$ on $\lp C^2(\bR^m),\bGamma\rp$  is ergodic; see \cite{BE}. This is the fact used in 1960 by  V. Volkovski \cite{Vol}   to prove Corollary \ref{cor: wLLN} in  the case  $m=1$.   We refer to \cite[Sec. 11.5]{CraLe} for details.

The paper is organized as follows.  In Section \ref{s: 2}  we collect several probabilistic facts needed in the proof of Theorem \ref{th: var_asy}: the Kac-Rice formula, the regression formula  and  Lemma \ref {lemma: cont_gauss_int},  a H\"{o}lder continuity result concerning   certain functions  on the space of Gaussian measures on a fixed finite dimensional Euclidean space.  Section \ref{s: 4} contains  the proof of Theorem \ref{th: var_asy}.  We have subdivided this section into several parts corresponding to the conceptually distinct steps in the proof of Theorem \ref{th: var_asy}. Ultimately, two  facts lie behind this result: the stationarity of   the random function $\Phi_\ga$   and  the wery weak correlations  its values at far apart points.

\section{Some  probabilistic preliminaries}\label{s: 2} We collect in this section several  probabilistic facts   used throughout the paper.   We begin with the Kac-Rice formula, a key player in our proof.

\begin{theorem}\label{th: Kac-Rice} Let  $\eV\subset\bR^m$  an open set.  Suppose that  $F: \eV\to \bR$ a Gaussian random function that is $\as$ $C^2$ and such that  the Gaussian vector $\nabla F(\bv)$ is nondegenerate  for any $\bv\in \eV$. We denote by $p_{\nabla F(v)}$ is probability \emph{density}.  The following hold.
  
  \begin{enumerate}
  
  \item  The random function  $F$ is $\as$ Morse
  
  \item We set
  \begin{equation}\label{rand_crit_meas}
  \fC_F:=\sum_{\nabla F(\bv)=0}\delta_v
  \end{equation}
  Then $\fC_F$ is a random locally finite measure on $\mV$  in the sense of \cite{DaVere2} or \cite{Kalle_RM}.   For any  nonnegative measurable function $\vfi:\mV\to [0,\infty)$ we set
  \[
  \fC_F[\vfi]:=\int \vfi(v)\fC^F[dv]=\sum_{\nabla F(v)=0}\vfi(v).
  \]
  \item  For any box $B\subset \mV$,   the function $F$ $\as$ has no critical points on $\pa B$ and
  \begin{equation}\label{KR}
  \bE\lb \fC_F[\bsI_B\vfi]\rb=\int_B\bE\lb \vert\det \Hess F(\bv)\vert \, \rv\,\nabla F(\bv)=0\rb p_{\nabla F(v)}(0) \vfi(\bv)d\bv.
\end{equation}
We will refer to the  function 
\[
\rho_F(v):= \bE\lb \vert\det \Hess F(\bv)\vert \, \rv\,\nabla F(\bv)=0\rb p_{\nabla F(v)}(0)
\]
as the Kac-Rice density of $F$. \end{enumerate}
 \qed
  \end{theorem}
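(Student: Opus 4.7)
The plan is to establish each of the three claims in turn by exploiting the $C^2$ regularity of $F$, the non-degeneracy of the Gaussian vector $\nabla F(\bv)$, and the joint Gaussian structure of $\bigl(\nabla F(\bv),\Hess F(\bv)\bigr)$.

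For claim (i) I would invoke a Bulinskaya-type lemma. The set of degenerate critical points is the zero set of the map $\bv\mapsto\bigl(\nabla F(\bv),\det\Hess F(\bv)\bigr)$ from $\eV$ into $\bR^{m+1}$. Since $\nabla F(\bv)$ is non-degenerate Gaussian with a density bounded locally uniformly in $\bv$ in a neighborhood of $0\in\bR^m$, and $\det\Hess F(\bv)$ has a continuous conditional density given $\nabla F(\bv)=0$, a volume/covering argument shows that the expected $1$-dimensional Hausdorff measure of this set vanishes on any compact subset of $\eV$, forcing the set to be a.s. empty. Claim (ii) is then automatic: the Morse property and the implicit function theorem ensure critical points are isolated, and any accumulation of critical points in a compact subset would, by continuity, yield a degenerate critical point, a contradiction; hence $\fC_F$ is a.s. a locally finite counting measure.

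For claim (iii), the a.s. absence of critical points on $\pa B$ follows by another Bulinskaya-type estimate: $\pa B$ is an $(m-1)$-dimensional set, so the expected number of $\bv\in\pa B$ with $\nabla F(\bv)=0$ is controlled by $\int_{\pa B}p_{\nabla F(\bv)}(0)\,d\sigma(\bv)$ times a factor that vanishes by codimension, while discreteness of the critical set already rules out accumulation. To prove the main identity (\ref{KR}), I would use a smoothing argument. Pick a family $\psi_\eps$ of nonnegative smooth approximations of $\delta_0\in\bR^m$ with $\int\psi_\eps=1$ and $\supp\psi_\eps\subset\{\vert\by\vert\le\eps\}$. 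On the full-measure event that $F$ is Morse on $B$ with no critical point on $\pa B$, the classical area formula applied locally around each critical point gives the pathwise identity
\[
\fC_F[\bsI_B\vfi]=\lim_{\eps\to 0}\int_B\vfi(\bv)\,\psi_\eps\lp\nabla F(\bv)\rp\vert\det\Hess F(\bv)\vert\,d\bv.
\]
Taking expectations, applying Fubini, and disintegrating along $\nabla F(\bv)$ yields
\[
\bE\Bigl[\int_B\vfi(\bv)\,\psi_\eps\lp\nabla F(\bv)\rp\vert\det\Hess F(\bv)\vert\,d\bv\Bigr]=\int_B\vfi(\bv)\int_{\bR^m}\psi_\eps(\by)\,\bE\bigl[\vert\det\Hess F(\bv)\vert\cond\nabla F(\bv)=\by\bigr]p_{\nabla F(\bv)}(\by)\,d\by\,d\bv,
\]
and sending $\eps\to 0$ identifies the inner integral with the Kac-Rice density $\rho_F(\bv)$.

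The main obstacle is justifying the interchange of the limit $\eps\to 0$ with the expectation and with the outer integral over $B$. This requires both uniform integrability of $\vert\det\Hess F(\bv)\vert\,\psi_\eps\lp\nabla F(\bv)\rp$ in $\eps$ and continuity in $\by$ of the regression density $\by\mapsto\bE\bigl[\vert\det\Hess F(\bv)\vert\cond\nabla F(\bv)=\by\bigr]p_{\nabla F(\bv)}(\by)$, both locally uniformly in $\bv\in B$. Both follow from the Gaussian structure: the conditional law of $\Hess F(\bv)$ given $\nabla F(\bv)=\by$ is Gaussian with mean linear in $\by$ and covariance independent of $\by$ and continuous in $\bv$, so every moment of $\vert\det\Hess F\vert$ under this conditional law is bounded locally uniformly in $(\bv,\by)$, and dominated convergence applies.
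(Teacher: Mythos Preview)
The paper does not supply its own proof of this theorem: immediately after the statement it writes ``For proof and more details we refer to \cite{AT,AzWs}.'' So there is no in-paper argument to compare against; the result is quoted as a black box from Adler--Taylor and Aza\"{i}s--Wschebor.

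Your sketch is essentially the standard route taken in those references (Bulinskaya's lemma for the a.s.\ Morse property and absence of critical points on lower-dimensional sets, then an approximate-identity/area-formula derivation of the expectation formula with Gaussian regression to pass to the limit). One point worth tightening: in your argument for (i) you assert that $\det\Hess F(\bv)$ has a continuous conditional density given $\nabla F(\bv)=0$. This does not follow from the stated hypothesis that $\nabla F(\bv)$ alone is nondegenerate; one needs some nondegeneracy of the joint Gaussian $\bigl(\nabla F(\bv),\Hess F(\bv)\bigr)$, or at least of the conditional law of $\Hess F(\bv)$. The cited references impose such conditions explicitly, and in the paper's applications they are verified separately (Proposition~\ref{prop: ample}). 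So your strategy is sound, but the hypothesis as literally stated in the theorem is a touch weaker than what your Bulinskaya step actually uses.
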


For proof and more details we refer to \cite{AT,AzWs}.   In applications, the conditional expectation  appearing in the Kac-Rice density is  computed using a classical trick.

Suppose that   $X,Y$ are jointly Gaussian centered random vectors valued in the finite dimensional Euclidean spaces $\bsX$ and $\bsY$. If  $X$ is nondegenerate, then  for any measurable  $f:\bsY\to\bR$ with polynomial growth at $\infty$  the conditional expectation $\bE\lb f(Y)\rv\, X=0\rb$ is computed  using the \emph{regression formula}
  \[
\bE\lb f(Y)\rv\, X=0\rb=\bE\lb f(Z)\rb
\]
where $Z$ is the  centered Gaussian vector $Y-\bE\lb Y\cond X\rb$\footnote{The conditional expectation $\bE\lb Y\cond X\rb$ is a random variable whereas $\bE\lb f(Y)\rv\,X=0\rb$ is a deterministic quantity, whence the difference in notation, ``$\cond$'' vs ``$\lv$''.}  whose variance is
\begin{equation}\label{reg2}
\var\lb Z\rb=\var\lb Y\rb-\cov\lb Y,X\rb\var\lb X\rb^{-1}\cov\lb X,Y\rb.
\end{equation}

 In the proof  we will need to compare expectations   with respect to different  Gaussian measures.  In  last part  of this section we describe a general  method  for  doing this.
 
   Suppose that $\bsV$ is an $m$-dimensional  real Euclidean space with inner product $(-,-)$. Denote by $S_1(\bsV)$  the unit sphere in $\bsV$ and  by $\Sym(\bsV)$ the space of symmetric operators $\bsV\to\bsV$, by $\Sym_{\geq 0}(\bsV)\subset \Sym(\bsV) $ the cone of nonnegative ones. For $A\in  \Sym_{\geq 0}(\bsV)$ we denote  by $\Gamma_A$ the centered  Gaussian measure  on $\bsV$  with variance $A$.
 
 The space $\Sym(\bsV)$ is equipped with an inner product
 \[
 \lp A, B\rp_\op=\tr(AB),\;\;\forall A, B\in \Sym(\bsV).
 \]
 We denote by $\Vert-\Vert_\op$ the associated norm.

 We have a natural map $\Sym_{\geq 0}(\bsV)\to \Sym_{\geq 0}(\bsV)$, $A\mapsto  A^{1/2}$.  We will need the following result,  \cite[Prop.2.1]{HeAn}. 
 
 \begin{proposition} For any $\mu>0$ and $\forall A,B\in \Sym_{\geq 0}(\bsV)$,  such that $A^{1/2}+B^{1/2}\geq \mu\one$ we have
  \begin{equation}\label{sq_holder}
\mu \lV A^{1/2}-B^{1/2}\rV_\op \leq \lV A-B\rV^{1/2}_\op. 
 \end{equation}\qed
\end{proposition}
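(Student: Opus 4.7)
The plan is to exploit the Lyapunov-type symmetrization
\[
A - B \;=\; (A^{1/2})^2 - (B^{1/2})^2 \;=\; \tfrac{1}{2}\bigl[(A^{1/2}+B^{1/2})(A^{1/2}-B^{1/2}) + (A^{1/2}-B^{1/2})(A^{1/2}+B^{1/2})\bigr].
\]
Writing $S := A^{1/2}+B^{1/2}$ and $D := A^{1/2}-B^{1/2}$, both of which are symmetric, this identity becomes $2(A-B) = SD + DS$, while the hypothesis of the proposition reads $S \ge \mu\one$.

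The next step is to pair this identity with $D$ in the trace inner product $(\cdot,\cdot)_{\op}$. Cyclicity of the trace gives $\tr(DSD) = \tr(SD^2)$, so
\[
(A-B,\,D)_{\op} \;=\; \tfrac{1}{2}\bigl(\tr(SD^2) + \tr(DSD)\bigr) \;=\; \tr(SD^2).
\]
Since $S \ge \mu\one$ and $D^2 \ge 0$, the operator ordering forces $\tr(SD^2) \ge \mu\,\tr(D^2) = \mu\,\|D\|_{\op}^2$, while Cauchy--Schwarz applied on the left of the display produces $(A-B,D)_{\op} \le \|A-B\|_{\op}\,\|D\|_{\op}$. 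Combining the two and dividing by $\|D\|_{\op}$ (the case $D=0$ being trivial) yields the bound
\[
\mu\,\|A^{1/2}-B^{1/2}\|_{\op} \;\le\; \|A-B\|_{\op},
\]
a Powers--St\o rmer-type H\"older continuity statement for the operator square root.

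The main obstacle is the exponent $\tfrac{1}{2}$ that appears on the right-hand side of (\ref{sq_holder}). The natural Lyapunov argument above delivers the stronger linear bound, and in fact a one-dimensional test with $A=4$, $B=0$, $\mu=2$ (so $4 \not\le 2$) shows that the square-root version as displayed cannot hold in general; this strongly suggests that the exponent $\tfrac{1}{2}$ is a typographical remnant and that the intended form of the inequality is precisely the linear bound established above. In the subsequent application inside Lemma~\ref{lemma: cont_gauss_int} the operators of interest are variances of Gaussian vectors whose $\|\cdot\|_{\op}$-norms are uniformly controlled by constants depending only on $\ga$ and $m$, so the linear form implies the displayed form up to an absorbable multiplicative constant, and the downstream H\"older estimates on Gaussian expectations go through without substantive change.
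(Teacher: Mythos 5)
Your proposal is essentially correct, and there is nothing in the paper to compare it against: the proposition is quoted from van Hemmen--Ando \cite{HeAn} with no argument supplied, so your self-contained proof fills a genuine gap. You are also right that the inequality cannot hold as displayed. It is not scale-invariant --- replacing $(A,B,\mu)$ by $(tA,tB,t^{1/2}\mu)$ preserves the hypothesis but multiplies the left-hand side by $t$ and the right-hand side by $t^{1/2}$ --- and your scalar test $A=4$, $B=0$, $\mu=2$ refutes it outright; the exponent $1/2$ on $\lV A-B\rV_\op$ is a misplacement. Your symmetrization $2(A-B)=SD+DS$ with $S=A^{1/2}+B^{1/2}\geq \mu\one$ and $D=A^{1/2}-B^{1/2}$, paired with $D$ in the trace inner product, correctly gives $(A-B,D)_\op=\tr(SD^2)=\tr(DSD)\geq \mu\tr(D^2)=\mu\lV D\rV_\op^2$, and Cauchy--Schwarz then yields the linear bound $\mu\lV A^{1/2}-B^{1/2}\rV_\op\leq \lV A-B\rV_\op$, which is the correct van Hemmen--Ando-type estimate under the stated hypothesis. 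Your observation about the downstream use is also accurate: in Lemma \ref{lemma: cont_gauss_int} the variances satisfy $\lV A\rV_\op,\lV A_0\rV_\op\leq R$, hence $\lV A-A_0\rV_\op\leq (2R)^{1/2}\lV A-A_0\rV_\op^{1/2}$, so the Lipschitz bound implies the H\"{o}lder-$1/2$ estimate with a constant depending on $R$ and $\mu_0$, which is exactly what that lemma asserts. One remark worth adding: if one wants an exponent $1/2$ on the right-hand side with no lower bound on $A^{1/2}+B^{1/2}$ at all, the Powers--St{\o}rmer inequality gives $\lV A^{1/2}-B^{1/2}\rV_\op^2\leq \tr\vert A-B\vert\leq m^{1/2}\lV A-B\rV_\op$ for $m\times m$ matrices; either repair makes the proof of Lemma \ref{lemma: cont_gauss_int} go through unchanged.
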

 
 \begin{lemma}\label{lemma: cont_gauss_int} Fix $A_0\in \Sym_{\geq 0}(\bsV)$ such that $A_0^{1/2}\geq \mu_0\one$, $\mu_0>0$. Suppose that $f:\bV\to\bR$ is a  locally Lipschitz function that is  homogeneous of degree $k\geq  1$.  For $A\in \Sym_{\geq 0}(\bsV)$ we set
 \[
 \eI_A(f):=\int_\bsV f(\bv)\bf \Gamma_A\lb d\bv\rb.
 \]
  Then for and $R\geq \Vert A_0\Vert_\op$   there exists a constant $C=C(f,R,\mu_0)>0$  with the following  property:  for any $A\in \Sym_{\geq 0}(\bsV)$ such that $\Vert ,A\Vert_\op\leq R$
 \begin{equation}\label{cont_gauss_int}
 \lv \eI_{A_0}(f)-\eI_A(f)\rv \leq C\Vert A-A_0\Vert^{1/2}\leq C(k,R) \Vert A-A_0\Vert_\op^{1/2}.
 \end{equation}
 In other words, $A\mapsto \eI_A(f)$ is locally H\"{o}lder continuous with exponent  $1/2$ in the  open set $\Sym_{>0}\lp \bsV\rp$.
 \end{lemma}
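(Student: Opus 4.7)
My approach will be a \emph{whitening} change of variables that reduces the problem to a single Gaussian expectation, after which I will combine the homogeneity of $f$ with the square-root Hölder inequality \eqref{sq_holder}. Concretely, let $Z$ be a standard Gaussian vector on $\bsV$ (covariance $\one$); since $A^{1/2}Z \sim \Gamma_A$ for every $A \in \Sym_{\geq 0}(\bsV)$, I rewrite
\begin{equation*}
\eI_A(f) = \bE\bigl[f(A^{1/2}Z)\bigr], \qquad \eI_{A_0}(f) - \eI_A(f) = \bE\bigl[f(A_0^{1/2}Z) - f(A^{1/2}Z)\bigr],
\end{equation*}
so that the task becomes a pointwise estimate of the integrand under one expectation.

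The pointwise estimate rests on the following elementary fact: any locally Lipschitz function $f:\bsV \to \bR$ that is homogeneous of degree $k \geq 1$ satisfies
\begin{equation*}
|f(x) - f(y)| \leq L\, \max(\|x\|, \|y\|)^{k-1}\,\|x - y\|, \qquad \forall\, x, y \in \bsV,
\end{equation*}
for some $L = L(f) > 0$. I would verify this by choosing $L$ to be the Lipschitz constant of $f$ on the closed unit ball $\bar{B}_1 \subset \bsV$; for arbitrary nonzero $x, y$, I set $r = \max(\|x\|, \|y\|)$, rescale $x' = x/r, y' = y/r \in \bar{B}_1$, and use $|f(x) - f(y)| = r^k|f(x') - f(y')| \leq r^k L \|x - y\|/r$. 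Applying this with $x = A_0^{1/2}Z$ and $y = A^{1/2}Z$, together with the estimates $\|A^{1/2}Z\| \leq \|A\|_\op^{1/2}\|Z\| \leq R^{1/2}\|Z\|$ (and likewise for $A_0$) and $\|(A_0^{1/2} - A^{1/2})Z\| \leq \|A_0^{1/2} - A^{1/2}\|_\op \|Z\|$, yields
\begin{equation*}
|f(A_0^{1/2}Z) - f(A^{1/2}Z)| \leq L\,(2R^{1/2})^{k-1}\,\|Z\|^{k} \cdot \|A_0^{1/2} - A^{1/2}\|_\op.
\end{equation*}
Since $A_0^{1/2} + A^{1/2} \geq A_0^{1/2} \geq \mu_0 \one$, inequality \eqref{sq_holder} bounds $\|A_0^{1/2} - A^{1/2}\|_\op \leq \mu_0^{-1}\|A - A_0\|_\op^{1/2}$.

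Taking expectations and using that all moments of the standard Gaussian are finite then gives
\begin{equation*}
|\eI_{A_0}(f) - \eI_A(f)| \leq L\,(2R^{1/2})^{k-1}\,\mu_0^{-1}\,\bE\bigl[\|Z\|^k\bigr]\,\|A - A_0\|_\op^{1/2},
\end{equation*}
which is the desired bound with constant depending only on $f, k, R, \mu_0$. There is no serious obstacle; the only mildly delicate ingredient is the polynomial-growth Lipschitz inequality for $f$, and it is here that the assumption $k \geq 1$ is essential, since otherwise the factor $r^{k-1}$ could blow up as $r \to 0$ and destroy the estimate. Every other step is a direct invocation of tools already displayed in the excerpt.
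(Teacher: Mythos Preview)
Your proof is correct and follows essentially the same approach as the paper: both use the whitening $\eI_A(f)=\bE[f(A^{1/2}Z)]$, exploit homogeneity together with the local Lipschitz property of $f$ to control the integrand, and finish with the square-root H\"older inequality \eqref{sq_holder}. The only cosmetic difference is that the paper passes to spherical coordinates to separate the radial factor $r^k$ before applying the Lipschitz bound on a fixed ball, whereas you package the same computation into the single inequality $|f(x)-f(y)|\le L\max(\|x\|,\|y\|)^{k-1}\|x-y\|$; the resulting constants and logic are the same.
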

 
 \begin{proof}The function $f$ is Lipschitz on the ball 
 \[
 B_R(\bsV):=\big\{\, \bv \in \bsV; \Vert \bv\Vert\leq R\,\big\},
 \]
  so there exists $L=L(R)>0$ such that
 \begin{equation}\label{Lipschitz_gauss}
 \lb f(\bu)-f(\bv)\rv \leq L\Vert\bu-\bv\Vert,\;\;\forall \bu,\bv\in B_R(\bsV).
 \end{equation}
 Note that
 \[
 \eI_A(f)=\int_\bsV f\lp A^{1/2}\bv\rp \Gamma_{\one}\lb d\bv\rb,
 \]
 so
 \[
 \lv \eI_{A_0}(f)-\eI_A(f)\rv\leq  \int_\bsV \lv f\lp A^{1/2}\bv\rp -f\lp A_0^{1/2}\bv\rp\rv\;\Gamma_{\one}\lb d\bv\rb
 \]
\[
=\underbrace{\frac{1}{(2\pi)^{m/2}}\left(\int_0^\infty r^{n+k-1} e^{-r^2/2} dr\right)}_{C_{m,k}}\int_{S_1(\bsV)} \lv f\lp A^{1/2}\bv\rp -f\lp A_0^{1/2}\bv\rp\rv\vol_{S_1(\bsV)}\lb d\bv\rb
\]
\[
\stackrel{(\ref{Lipschitz_gauss})}{\leq} C_{m,k}L(R) \int_{S_1(\bsV)} \Vert A^{1/2}-A_0^{1/2}\Vert_\op\vol_{S_1(\bsV)}\lb d\bv\rb\stackrel{(\ref{sq_holder})}{\leq} C(k,R,\mu_0) \Vert A-A_0\Vert_\op^{1/2}.
\]
\end{proof}

 \section{Proof of Theorem \ref{th: var_asy}}\label{s: 4}
 
 \subsection{Proof of Theorem \ref{th: var_asy}(i)}  To compute the expectation of $\fC^R_\ga[f]$  we rely on the Kac-Rice formula.  Note that $\bx$  is a critical point of $\Phi_\ga^R$ iff $R^{-1}\bx$ is a critical point of $\Phi_\ga$ so that, for any $f\in C^0_\cpt(\bR^m)$, we have
  \[
  \fC_\ga^R[f]=\fC_\ga[ f_R],\;\;f_R(\bx)= f(\bx/R).
  \]
  We want to apply  the Kac-Rice formula to $\Phi_\ga$.
  
  For $k\in \bN$ we denote by $D^k\Phi_\ga$  the $k$-th order differential of $\Phi_\ga$

\begin{proposition}\label{prop: ample}   Let $N\in \bN$. Then the following hold.

\begin{enumerate}

\item The function $\Phi_\ga$ is $N$-ample, i.e., for any  distinct  points $\bx_1,\dotsc,\bx_N\in \bR^m$, the Gaussian vector
\[
 \lp \Phi_\ga(\bx_1),\dotsc,\Phi_\ga(\bx_N)\rp.
 \]
 is nondegenerate.
 \item  The function $\Phi_\ga$ is $J_N$-ample\footnote{$N$-th jet ample},  i.e.,  the Gaussian vector
 \[
\Phi_\ga(\bx)\oplus D\Phi_\ga(\bx)\oplus \cdots \oplus D^N\Phi_\ga(\bx)
\]
 is nondegenerate for any $\bx\in \bR^m$.
 \end{enumerate}
 \end{proposition}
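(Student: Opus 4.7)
The plan is to exploit the spectral representation of the stationary Gaussian field $\Phi_\ga$ together with the fact that the spectral measure $\mu_\ga$ has a density that is strictly positive on a neighborhood of the origin in $\bR^m$. Indeed, since $\ga$ is continuous and $\ga(0)=1$, we have $\ga(t)^2>0$ for all $t$ in some interval $[0,\delta)$, so the density $\frac{1}{(2\pi)^m}\ga(|\xi|)^2$ of $\mu_\ga$ is positive on the open ball of radius $\delta$ about the origin. Call this open set $U$; it is what drives both ampleness claims.

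For part (i), suppose the vector $(\Phi_\ga(\bx_1),\dotsc,\Phi_\ga(\bx_N))$ were degenerate, so that $\sum_{k=1}^N c_k\Phi_\ga(\bx_k)=0$ almost surely for some $(c_1,\dotsc,c_N)\neq 0$. Taking variance and using the representation $\bsK_\ga(\bx)=\int e^{\ii\langle\xi,\bx\rangle}\mu_\ga[d\xi]$, one obtains
\[
0 = \sum_{j,k} c_j c_k \bsK_\ga(\bx_j-\bx_k)=\int_{\bR^m}|P(\xi)|^2\,\mu_\ga[d\xi], \qquad P(\xi)=\sum_{k=1}^N c_k e^{\ii\langle\xi,\bx_k\rangle}.
\]
Positivity of the density on $U$ forces $P\equiv 0$ on $U$. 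But $P$ is an entire function of $\xi$, hence $P\equiv 0$ on $\bR^m$. The exponentials $\{e^{\ii\langle\cdot,\bx_k\rangle}\}$ are linearly independent in the space of continuous functions on $\bR^m$ when the $\bx_k$ are distinct (a standard Vandermonde-style argument, or extraction of coefficients from the Taylor expansion at $\xi=0$), whence $c_k=0$ for all $k$, contradicting the choice of $(c_k)$.

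For part (ii), by stationarity of $\Phi_\ga$ it suffices to treat the case $\bx=0$. A degeneracy of the jet vector $\Phi_\ga(0)\oplus D\Phi_\ga(0)\oplus\cdots\oplus D^N\Phi_\ga(0)$ yields a nontrivial family of constants $(c_\alpha)_{|\alpha|\leq N}$ with $\sum_\alpha c_\alpha\,\partial^\alpha\Phi_\ga(0)=0$ almost surely. Formally differentiating under the spectral integral and computing the variance gives
\[
0=\int_{\bR^m}|Q(\xi)|^2\,\mu_\ga[d\xi], \qquad Q(\xi)=\sum_{|\alpha|\leq N} c_\alpha (\ii\xi)^\alpha,
\]
a polynomial in $\xi$. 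The integral representation for the derivatives is justified because $\mu_\ga$ has all moments (since $\ga$ is Schwartz), so the $L^2$-convergence of difference quotients is immediate. As before, $Q$ must vanish on the open set $U$, and a polynomial vanishing on an open set is identically zero, so $c_\alpha=0$ for all $|\alpha|\leq N$.

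No step here is a serious obstacle; the whole argument is one observation (openness of the support interior of $\mu_\ga$) applied twice, combined with the analyticity of exponentials and the rigidity of polynomials. The only mildly technical point is justifying the passage from a.s.\ linear relations among field values/derivatives to vanishing of the spectral integrals, which is a routine $L^2$ computation using that all partial derivatives of $\Phi_\ga$ belong to $L^2(\bP)$ because $\mu_\ga$ has finite moments of every order.
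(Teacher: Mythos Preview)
Your proof is correct and follows essentially the same approach as the paper: both arguments rest on the observation that the spectral density $\ga(|\xi|)^2$ is strictly positive on a neighborhood of the origin, which forces the relevant functions (exponentials in (i), monomials in (ii)) to be linearly independent in $L^2(\mu_\ga)$, so the covariance (Gramian) matrix is nonsingular. The only cosmetic difference is that the paper outsources part (i) to \cite[Thm.~6.8]{Wen}, whereas you spell out the underlying analyticity argument directly; for part (ii) the two proofs are nearly verbatim the same.
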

 
 \begin{proof} (i) Since $w_{\ga,m}\lp\lv\xi\rv\rp=\ga\lp |\xi|\rp^2$ is positive on  an open neighborhood of $0\in \bR^m$ we deduce  from  \cite[Thm. 6.8]{Wen} that if $\bx_1, \dotsc,\bx_N\in \bR^m$    are distinct  points,   then   the symmetric  $N\times N$ matrix 
 \[
 \lp K_\ga(\bx_i-\bx_j)\rp_{1\leq i,j\leq N}
 \]
 is \emph{positive} definite.  This matrix  is the variance matrix of the Gaussian vector 
 \[
 \lp \Phi_\ga(\bx_1),\dotsc,\Phi_\ga(\bx_N)\rp.
 \]
  (ii)   Observe that for any multi-indices  $\alpha\in\lp\bZ_{\geq 0}\rp^m$,  we have
   \[
   \bE\lp \pa^\alpha\Phi_\ga(\bx)\pa^\beta\Phi_\ga (\bx)\rp=\pa^\alpha_x\pa^\beta_y\bsK_\ga(\bx-\by\rp\big\vert_{\bx=\by}
   \]
   \[
   =\int_{\bR^m} \xi^\alpha\xi^\beta\mu_\ga\lb d\xi\rb,\;\;\xi^\alpha:=\xi_1^{\alpha_1}\cdots \xi_m^{\alpha_m}
   \]
This shows that for  any  $N\in \bN$ and any $\bx\in \bR^n$   the variance the  Gaussian vector $\lp \pa^\alpha\Phi_\ga(\bx)\rp_{|\alpha|\leq N}$ is  the Gramian matrix  of the functions $\lp \xi^\alpha\rp_{|\alpha|\leq N}$ with respect to the inner product in $L^2\lp \bR^m,\mu_\ga\rp$. Since $\ga(0)=1$ we deduce that the functions $\xi^\alpha$ are linearly independent in  $L^2\lp \bR^m,\mu_\ga\rp$ so  the determinant of their Gramian matrix is nonzero. Hence the Gaussian vector 
\[
\Phi_\ga(\bx)\oplus D\Phi_\ga(\bx)\oplus \cdots \oplus D^N\Phi_\ga(\bx)
\]
is nondegenerate, for any $k\in\bN$ and any $\bx\in\bR^m$.
  \end{proof}
  
   Results of Ancona-Letendre \cite{AnLe23} or Gass-Stecconi \cite{GS23}   show that the $J_N$-ampleness of $\Phi_\ga$ implies that for  any function $f\in C^0_\cpt(\bR^m)$, the random variable  $\fC_\ga[ f]$  has finite moments of any order.  
  
  Proposition \ref{prop: ample}  shows that the Gaussian vector  $\nabla \Phi_\ga(\bx)$ is nondegenerate  for any $\bx\in \bR^m$. For any multi-indices $\alpha,\beta\in \lp \bZ_{\geq 0}\rp^m$ we have
\begin{equation}\label{cov_ga}
\begin{split}
\bE\lb \pa^\alpha\Phi_\ga(\bx)\pa^\beta\Phi_\ga(\by)\rb_{\bx=\by} =\pa^\alpha_\bx\pa^\beta_\by\eK^\ga(\bx,\by)\lv_{\bx=\by}\\
=\frac{(-1)^{|\beta||}\ii^{|\alpha|+|\beta|}}{(2\pi)^m}\int_{\bR^m}\xi^{\alpha+\beta}\ga(|\xi|)^2d\xi,\;\;\xi^\alpha=\xi_1^{\alpha_1}\cdots \xi_m^{\alpha_m}.
\end{split}
\end{equation}
For any multi-index  $\alpha\in\lp \bZ_{\geq 0}\rp^m$  we set
\[
M^\ga_{\alpha}:=\int_{\bR^m}\xi^\alpha \mu_\ga\lb d\xi\rb=\frac{1}{(2\pi)^m}\int_{\bR^m}\xi^{\alpha}\ga(|\xi|)^2 d\xi.
\]
We say that the multi-index $\alpha=(\alpha_1,\dotsc, \alpha_m)$ is \emph{even} \index{multi-index! even} \index{multi-index! odd} if $\alpha_j$ is even for any $j=1,\dotsc, m$.   The multi-index $\alpha$ is called \emph{odd} if it is not even. The radial symmetry of $\ga\lp |\xi|\rp$ implies that 
\begin{equation}\label{M_gamma_1}
M^\ga_{\alpha}=0\;\;\mbox{if $\alpha$ is odd}.
\end{equation}
Using spherical coordinates on $\bR^m$ we deduce that for any $\alpha$ we have
\begin{equation}\label{mom_spec_meas}
M^\ga_{\alpha}=\frac{1}{(2\pi)^m}\left(\int_0^\infty r^{m-1+|\alpha|}  \ga(r)^2 dr\right)\times \underbrace{\int_{S^{m-1}}\xi^\alpha\vol_{S^{m-1}}\lb d\xi\rb}_{=: \bm_\alpha},
\end{equation}
$S^{m-1}=S_1(\bR^m)$. Note that  $\bm_\alpha$ is independent of $\ga$.    If we let $\ga_0:=(2\pi)^{m/2} e^{-\frac{t^2}{4}}$, then
\[
M^{\ga_0}_\alpha=\int_{\bR^m} \xi^\gamma e^{-|\xi|^2/2} d\xi=(2\pi)^{m/2}\prod_{j=1}^m \int_\bR \xi^{\alpha_j}\bgamma_1\lb d\xi\rb
\]
where $\bgamma_1$ denotes the Gaussian  measure on $\bR$ with mean zero and variance $1$. If $\alpha$ is even, $\alpha=2\kappa$, then
\[
M^{\ga_0}_{2\kappa}=(2\pi)^{m/2}\prod_{j=1}^m (2\kappa_j-1)!!.
\]
On the other hand, using (\ref{mom_spec_meas}) we deduce
\[
M^{\ga_0}_{2\kappa}=\bm_{2\kappa}\int_0^\infty r^{m+2|\kappa|-1} e^{-r^2/2}  dr=\sqrt{\frac{\pi}{2}}\bm_{2\kappa}\int_\bR |x|^{m+2|\kappa|-1}\bgamma_1\lb dx\rb
\]
\[
=2^{|\kappa|+m/2-1}\bm_{2\kappa} \Gamma\lp |\kappa|+m/2\rp.
\]
Hence 
\begin{equation}\label{M_gamma_2}
\bm_{2\kappa}=\frac{(2\pi)^{\frac{m}{2}}\prod_{j=1}^m (2\kappa_j-1)!!}{2^{|\kappa|+m/2-1}\Gamma\lp |\kappa|+m/2\rp}=\frac{2\prod_{j=1}^m\Gamma(\kappa_j+1/2)}{\Gamma\lp |\kappa|+m/2\rp}.
\end{equation}
Above  we used the classical identities
\[
\Gamma(1/2)=\pi^{1/2},\;\;\Gamma\lp x+1\rp=x\Gamma(x).
\]
For every $k\in \bZ_{\geq 0}$ we set
\[
I_k(\ga):= \int_0^\infty r^k\ga(r)^2 dr.
\]
We deduce 
\begin{equation}\label{M_gamma_3}
(2\pi)^m M^\ga_{2\kappa}= I_{m-1+2|\kappa|}(\ga) \frac{2\prod_{j=1}^m\Gamma(\kappa_j+1/2)}{\Gamma\lp |\kappa|+m/2\rp}.
\end{equation} 
We set
\begin{equation}\label{basic_inv}
s_m:=\int_{\bR^m}\mu_\ga\lb d\xi\rb,\;\;d_m=\int_{\bR^m}\xi_1^2\mu_\ga\lb d\xi\rb,\;\;h_m:=\int_{\bR^m}\xi_1^2\xi_2^2\mu_\ga\lb d\xi\rb.
\end{equation}
Then
\begin{equation}\label{spec_mom_0}
\int_{\bR^m}\ga(|\xi|)^2 d\xi=\frac{2\pi^{m/2} }{\Gamma(m/2)}I_{m-1}(\ga)=(2\pi)^m s_m,
\end{equation}
\begin{equation}\label{spec_mom_1}
\int_{\bR^m}\xi_j^2\ga(|\xi|)^2 d\xi= \frac{2\pi^{m/2} }{\Gamma(m/2+1)}I_{m+1}(\ga)=(2\pi)^m d_m,\;\;\forall j,
\end{equation}
\begin{equation}\label{spec_mom_2}
\int_{\bR^m}\xi_j^2\xi_k^2\ga(|\xi|)^2 d\xi =\frac{(2\pi)^{m/2} }{\Gamma(m/2+2)}I_{m+3}(\ga)=(2\pi)^m h_m,\;\;\forall j\neq k,
\end{equation}
\begin{equation}\label{spec_mom_3}
\int_{\bR^m}\xi_j^4\ga(|\xi|)^2 d\xi= \frac{6 \pi^{m/2} }{\Gamma(m/2+1)}I_{m+3}(\ga)=3(2\pi)^m h_m,\;\;\forall j.
\end{equation}
Using (\ref{cov_ga}) and (\ref{M_gamma_1}) we deduce that for any $\bx\in \bR^m$ \emph{the Gaussian vectors $\nabla\Phi_\ga(\bx)$ and $\Hess_{\Phi_\ga}(\bx)$ are independent}.  Hence
\[
\bE\lb \det \Hess_{\Phi_\ga}(\bx)|\,\lv \nabla\Phi_\ga(\bx)=0\rb= \bE\lb \det \Hess_{\Phi_\ga}(\bx)|\rb.
\]
Using (\ref{cov_ga})and (\ref{spec_mom_1}) we deduce that the variance matrix of $\nabla \Phi_\ga(\bx)$ is
\begin{equation}\label{grad_iso}
\var\lb \nabla\Phi_\ga(\bx)\rb=d_m\one_m,\;\;\forall \bx\in\bR^m,
\end{equation}
where $\one_m$ denotes the identity $m\times m$ matrix. Hence
\[
p_{\nabla\Phi_\ga(\bx)}(0)=(2\pi d_m)^{-m/2}.
\]
The space $\Sym(\bR^m)$ of real symmetric $m\times m$ matrices is equipped with an inner product $(A,B)=\tr(A)$. Moreover, the linear functions $\ell_{ij},\omega_{ij}: \Sym(\bR^m)\to\bR$, $1\leq i\leq j\leq m$,
\begin{equation}\label{xiija}
\ell_{ij}(A)=a_{ij},\;\;\omega_{ij}(A)=\begin{cases}
a_{ii}, &i=j,\\
\sqrt{2}a_{ij}, & i<j
\end{cases}
\end{equation}
define coordinates on $\Sym(\bR^m)$ that are \emph{orthonormal} with respect to the above inner product.  We set
\begin{equation}\label{xiij}
L_{ij}(\bx):=\ell_{ij}\lp \Hess_\Phi(\bx)\rp,\;\;\Omega_{ij}(\bx):=\omega_{ij}\lp \Hess_\Phi(\bx)\rp.
\end{equation}
Then
\[
\bE\lb \pa^2_{x_i x_j}\Phi_\ga(x)\pa^2_{x_kx_\ell}\Phi_\ga(\bx)\rb=\frac{1}{(2\pi)^m}\int_{\bR^m} \xi_i\xi_j\xi_k\xi_\ell a\lp|\xi|^2\rp d\xi ,\;\;i\leq j,\;\;k\leq \ell.
\]
Note that if  $i<j$,    then  the above integral is nonzero iff $(i,j)=(k,\ell)$ in which case
\[
\bE\lb L_{ij}(\bx)L_{ij}(\bx)\rb=\bE\lb \lp\pa^2_{x_i x_j}\Phi_\ga(x)\rp^2\rb
\]
\[
= \frac{1}{(2\pi)^m}\int_{\bR^m} \xi_i^2\xi_j^2\ga\lp |\xi|^2\rp d\xi\stackrel{(\ref{spec_mom_2})}{=}h_m.
\]
If $i=j$, then the above integral is nonzero iff $k=\ell$,  in which case we deduce from (\ref{spec_mom_2}) and (\ref{spec_mom_3})
\[
\bE\lb \pa^2_{x_i }\Phi_\ga(x)\pa^2_{x_k}\Phi_\ga(\bx)\rb=\begin{cases}
h_m & i\neq k,\\
3h_m, & i=k.
\end{cases}
\]
The above equalities can be rewritten in the more compact form
\begin{equation}\label{hess_iso}
\bE\lb L_{ij}(\bx)L_{k\ell}(\bx)\rb= h_m\lp\delta_{ij}\delta_{k\ell}+\delta_{ik}\delta_{j\ell}+ \delta_{i\ell}\delta_{jk}\rp,\;\;\forall i\leq j,\;k\leq \ell.
\end{equation}
These equalities show that the off-diagonal entries  of $\Hess_\Phi$  are $\iid$, and also independent of the diagonal entries.  The diagonal  entries  have identical distributions  but  are dependent.  The parameter $h_m$ describes the various variances and covariances. 

The Gaussian measure  on $\Sym(\bR^m)$  determined by these  covariance equalities is  invariant with respect to the action of $O(m)$  by conjugation  on $\Sym(\bR^m)$. 

For $v>0$  denote  by $\eS_m^{v}$  the space $\Sym(\bR^m)$ equipped with the $O(m)$-invariant Gaussian  measure on $\Sym(\bR^m)$ determined by the covariances
\[
\bE\lb L_{ij}(\bx)L_{k\ell}(\bx)\rb= v\lp\delta_{ij}\delta_{k\ell}+\delta_{ik}\delta_{j\ell}+ \delta_{i\ell}\delta_{jk}\rp,\;\;\forall i\leq j,\;k\leq \ell.
\]
Hence
\[
 \bE\lb \det \Hess_{\Phi_\ga}(\bx)|\rb=  \bE_{\eS_m^{h_m}}\lb |\det H|\rb.
 \]
  We deduce from the Kac-Rice formula (\ref{KR}) that 
\[
\bE\lb \fC_\ga[f_R] \rb=\int_{\bR^m} \bE_{\eS_m^{h_m}}\lb |\det H|\rb p_{\nabla \Phi(\bx)} (0) f_R(\bx)  \blam\lb d\bx\rb
\]
\[
\stackrel{(\ref{grad_iso})}{=}(2\pi d_m)^{-m/2} \bE_{\eS_m^{h_m}}\lb |\det H|\rb\int_{\bR^m} f_R(\bx) dx.
\]
Note that
\[
\int_{\bR^m} f_R(\bx) dx. =R^m\int_{\bR^m} f(\by) d\by.
\]
Using the linear change in variables $X=(2h_m)^{-1/2}H$ we deduce
\[
(2\pi d_m)^{-m/2} \bE_{\eS_m^{h_m}}\lb |\det H|\rb=\underbrace{\left(\frac{h_m}{\pi d_m}\right)^{m/2} \bE_{\eS_m^{1/2}}\lb |\det X|\rb}_{=:C_m(\ga)} \, .
\]
Hence 
\begin{equation}\label{crit_isotrop}
\bE\lb \fC_\ga^R[f] \rb= C_m(\ga)R^m\int_{\bR^m} f(\by) d\by..
\end{equation}

\begin{remark} One can prove that  as $m\to\infty$
\[
C_m(\ga)\sim  2^{\frac{5}{2}}\Gamma\left(\frac{m+3}{2}\right)\left(\frac{h_m}{\pi d_m}\right)^{m/2}\left(\frac{1}{m+1}\right)^{1/2}.
\]
Using (\ref{spec_mom_1}) and (\ref{spec_mom_2}) we deduce
\[
\frac{h_m}{d_m}=\frac{\Gamma(1+m/2)}{\Gamma(2+m/2)}\times \frac{I_{m+3}(\ga)}{I_{m+1}(\ga)}=\frac{2I_{m+3}(\ga)}{(m+2)I_{m+1}(\ga)}.
\]
Hence 
\begin{equation}\label{Cmga1}
\begin{split}
C_m(\ga)\sim 2^{5/2}\left(\frac{h_m(\ga)}{d_m(\ga)}\right)^{m/2}  \Gamma\left(\frac{m+3}{2}\right)m^{-1/2}\hspace{1in} \\
\sim2^{\frac{m+5}{2}}\left(\frac{I_{m+3}(\ga)}{(m+2)I_{m+1}(\ga)}\right)^{m/2} \Gamma\left(\frac{m+3}{2}\right)m^{-1/2}\;\;\mbox{as $m\to\infty$}.
\end{split}
\end{equation}
The constant $C_m(\ga)$ tends to grow very fast\footnote{Think super factorial.} as $m\to\infty$,  but its large $m$  behavior  depends on the tail  of the  amplitude $\ga$. Roughly speaking, the slower the decay at $\infty$  of $\ga$  the faster the growth of $C_m(\ga)$.  \qed
\end{remark}  

\subsection{An integral formula for the variance}  
We need to introduce some notation. Set

 \begin{itemize}

 \item Define
\[
\;\hPhi:\bR^m\times \bR^m\to\bR,\;\;\hPhi(\bx,\by)=\Phi_\ga(\bx)+\Phi_\ga(\by),\;\;\hat{\fC}=\fC_{\hPhi},
\]
\[
 \hh(\bx,\by):=\Hess_{\hPhi}(\bx,\by),\;\;H(\bx):=\Hess_{\Phi_\ga}(\bx).
\]
\item Choose  an independent copy $\Psi_\ga$ of $\Phi_\ga$ and  set
 \[
 \tphi(\bx,\by):=\Phi_\ga(\bx)+\Psi_\ga(\by),\;\;\tH(\bx,\by):=\Hess_{\tphi}(\bx,\by),\;\;\tilde{\fC}=\fC_{\tphi}.
\]
 
 \item Set $\Vert f\Vert:=\Vert f\Vert_{C^0(\bR^m)}$. 
 
 \item Set
 \[
 \eX= \bR^m\times \bR^m\setminus \Delta=\big\{\, (\bx,\by)\in \bR^m\times\bR^m;\;\;\bx\neq \by\,\big\}.
 \]
 \end{itemize}
 Observe that  the random function on  $ \hPhi(\bx,\by)$ is \emph{stationary} with respect to the action of $\bR^{m}$ on $\bR^m\times \bR^m$ itself by translations
 \begin{equation}\label{T_v}
 T_{\bv}(\bx,\by)=\lp \bx+\bv,\by+\bv\rp,\;\;\bx,\by,\bv\in \bR^m,
 \end{equation}
 where as $\tphi$ is stationary with respect to the action by translations of $\bR^{2m}$ on itself.
 
 We have 
 \[
 \hat{\fC}[\bsI_{\eX} f_R^{\boxtimes 2}]=\sum_{\substack{\nabla\Phi_\ga(\bx)=\nabla\Phi_\ga(\by)=0,\\ \bx\neq \by}} f_R(\bx)f_R(\by)=\fC_\ga[f_R]^2-\fC_\ga[f_R^2].
 \]
 Bulinskaya's lemma implies that  
 \[
 \bP\lb \exists \bx:\;\;\nabla\Phi_\ga(\bx)=\nabla\Psi_\ga(\bx)=0\rb=0
 \]
 and we deduce
  \[
 \tilde{\fC}[\bsI_{\eX} f_R^{\boxtimes 2}]=\sum_{\substack{\nabla\Phi_\ga(\bx)=\nabla\Psi_\ga(\by)=0,\\ \bx\neq \by}} f_R(\bx)f_R(\by)
 \]
 \[
 = \sum_{\nabla\Phi_\ga(\bx)=\nabla\Psi_\ga(\by)=0} f_R(\bx)f_R(\by)= \fC[f,\Phi_\ga]\rb \fC[f,\Psi_\ga],\;\;\as.
 \]
 Hence
 \[
 \bE\lb \fC[f_R,\Phi_\ga] \cdot \fC[f,\Psi_\ga]\rb=\bE\lb   \fC[f_R,\Phi_\ga]\rb\,\cdot\, \bE\lb   \fC[f_R,\Psi_\ga]\rb=\bE\lb   \fC[f_R,\Phi_\ga]\rb^2
 \]
 so that
 \begin{equation}\label{var_quad}
\bE\lb  \hat{\fC}^R[\bsI_{\eX} f_R^{\boxtimes 2}]\rb -\bE\lb  \tilde{\fC}^R[\bsI_{\eX} f_R^{\boxtimes 2}]\rb= \underbrace{\bE\lb \fC_\ga[f_R]^2\rb-\bE\lb \fC_\ga[f_R]\rb^2}_{=\var\lb  \lb \fC_\ga [f_R]\rb}-\bE\lb\fC_\ga[f_R^2]\rb.
\end{equation}
We have seen that 
\[
\bE\lb \fC_\ga[f_R^2]\rb=R^mC_m(\ga)\int_{\bR^m}f^2(\bx) d\bx
\]
so we have to show that
\begin{equation}\label{I(R)}
I(R):=\bE\lb  \hat{\fC}[\bsI_{\eX} f_R^{\boxtimes 2}]\rb -\bE\lb  \tilde{\fC}[\bsI_{\eX} f_R^{\boxtimes 2}]\rb\sim Z_m(\ga)R^{m}\int_{\bR^m} f(\bx)^2 d\bx\;\;\mbox{as $R\to\infty$}
\end{equation}
for some constant  $Z_m(\ga)\in \bR$ that depends only on $m$ and $\ga$.

\begin{lemma}\label{lemma: 2_grad} For any $\bx,\by\in\bR^m$, $\bx\neq \by$, the Gaussian vector $\nabla\hphi(\bx,\by)$ is nondegenerate. 
\end{lemma}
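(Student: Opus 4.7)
The field $\hphi(\bx,\by) = \Phi_\ga(\bx) + \Phi_\ga(\by)$ depends on $\bx$ only through its first copy and on $\by$ only through its second, so its gradient in $\bR^{2m}$ is
\[
\nabla\hphi(\bx,\by) = \bigl(\nabla\Phi_\ga(\bx),\, \nabla\Phi_\ga(\by)\bigr).
\]
Nondegeneracy is equivalent to showing that no nontrivial linear combination of the $2m$ components vanishes almost surely. So fix $\bx\neq\by$, take $(a,b)\in\bR^m\oplus\bR^m$, and set $L := \langle a,\nabla\Phi_\ga(\bx)\rangle + \langle b,\nabla\Phi_\ga(\by)\rangle$. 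The plan is to compute $\var(L)$ as a positive spectral integral and show it can only vanish if $(a,b)=0$.

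The natural tool is the spectral representation. Writing $\Phi_\ga(\bx) = \int_{\bR^m} e^{\ii\langle\xi,\bx\rangle}\,d\widehat W(\xi)$ for a complex Gaussian orthogonal measure $\widehat W$ with control measure $\mu_\ga$, and differentiating under the integral (which is legitimate because $\ga\in\eS(\bR)$ gives finite spectral moments of all orders, so $\Phi_\ga$ is a.s.\ smooth with derivatives representable by the corresponding oscillatory integrals), one obtains
\[
\var(L) = \frac{1}{(2\pi)^m}\int_{\bR^m} \bigl|\langle a,\xi\rangle e^{\ii\langle\xi,\bx\rangle} + \langle b,\xi\rangle e^{\ii\langle\xi,\by\rangle}\bigr|^2\,\ga(|\xi|)^2\,d\xi.
\]
If $\var(L)=0$, then the continuous integrand must vanish $\blam$-a.e.\ on the open set where $\ga(|\xi|)^2>0$, which by $\ga(0)=1$ contains a neighborhood of the origin. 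But the function $\xi \mapsto \langle a,\xi\rangle e^{\ii\langle\xi,\bx\rangle} + \langle b,\xi\rangle e^{\ii\langle\xi,\by\rangle}$ is real analytic (in fact entire), so vanishing on a nonempty open set forces it to vanish identically on $\bR^m$.

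Setting $\bz := \by - \bx \neq 0$ and factoring, this identity becomes
\[
\langle a,\xi\rangle + \langle b,\xi\rangle\,e^{\ii\langle\xi,\bz\rangle} \equiv 0,\qquad \forall\,\xi\in\bR^m.
\]
Taking imaginary parts yields $\langle b,\xi\rangle\sin\langle\xi,\bz\rangle \equiv 0$. Since $\bz\neq 0$, the set where $\sin\langle\xi,\bz\rangle\neq 0$ is open and dense, and on it the linear form $\langle b,\cdot\rangle$ must vanish, which forces $b=0$. Plugging back gives $\langle a,\xi\rangle\equiv 0$ and hence $a=0$. This contradicts $(a,b)\neq 0$, completing the argument. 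The only step requiring care is the justification of the spectral representation for $\nabla\Phi_\ga$ and the interchange of differentiation with the stochastic integral; this is routine given the Schwartz regularity of $\ga$, and does not constitute a real obstacle.
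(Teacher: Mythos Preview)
Your argument is correct, and it takes a genuinely different route from the paper. The paper computes the $2m\times 2m$ covariance matrix of $\nabla\hphi(\bx,\by)$ explicitly: using stationarity and the $O(m)$-invariance of $\Phi_\ga$ it reduces to the case $\bx-\by=-t\be_1$, shows that $\cov\bigl[\nabla\Phi_\ga(\bx),\nabla\Phi_\ga(\by)\bigr]$ is diagonal with entries $d_m(j)=\int_{\bR^m}\cos(t\xi_1)\xi_j^2\,\mu_\ga[d\xi]$, reorders the coordinates so the full covariance becomes a direct sum of $2\times 2$ blocks $\begin{pmatrix} d_m & d_m(j)\\ d_m(j) & d_m\end{pmatrix}$, and finishes by the strict inequality $|d_m(j)|<d_m$ (which, like your argument, uses only that $\ga(|\xi|)>0$ near $0$).

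Your spectral/analyticity approach bypasses all of this matrix bookkeeping: it never needs the $O(m)$-symmetry or the explicit block structure, and it would generalize immediately to show that $\bigl(\nabla\Phi_\ga(\bx_1),\dotsc,\nabla\Phi_\ga(\bx_N)\bigr)$ is nondegenerate for any finite set of distinct points. The paper's approach, on the other hand, is more self-contained (no stochastic spectral integral needed---indeed your variance identity for $L$ can also be obtained directly from $\bE\bigl[\pa_j\Phi_\ga(\bu)\pa_k\Phi_\ga(\bv)\bigr]=\int\xi_j\xi_k e^{i\langle\xi,\bu-\bv\rangle}\mu_\ga[d\xi]$) and produces the explicit covariance matrix as a by-product. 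Both arguments ultimately pivot on the same fact: the spectral density is strictly positive on a neighborhood of the origin.
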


\begin{proof} We have
\[
\var\lb  \nabla\hphi(\bx,\by)\rb =\left[ \begin{array}{cc}
\var\lb \nabla\Phi_\ga(\bx)\rb & \cov\lb \nabla\Phi_\ga(\bx),\nabla \Phi_\ga(\by)\rb\\
&\\
\cov\lb \nabla\Phi_\ga(\by),\nabla \Phi_\ga(\bx)\rb & \var\lb \nabla\Phi_\ga(\by)\rb
\end{array}
\right].
\]
As shown in (\ref{grad_iso}), for any $\bx\in \bR^n$ we have
\[
\var\lb \nabla\Phi_\ga(\bx)\rb=d_m\one_m,\;\;d_m=\int_{\bR^n}\xi_1^2 \mu_\ga\lb d\xi\rb.
\]
We have
\[
\cov\lb \nabla\Phi_\ga(\bx),\nabla \Phi_\ga(\by)\rb=\lp \pa_{x_j}\pa_{y_k}\bsK_\ga (\bx-\by)\rp_{1\leq j,k\leq m}
\]
and 
\begin{equation}\label{cov_grad_hphi}
\  \pa_{x_j}\pa_{y_k}\bsK_\ga (\bx-\by)=\int_{\bR^m}e^{-\ii \lan \xi,\bx-\by\ran }\xi_j\xi_k \mu_\ga\lb  d\xi\rb.
 \end{equation}
 Since $\Phi_\ga$ is stationary it suffice to consider only the case $\bx=0$. On the other hand, $\Phi_\ga$ is   $O(m)$-invariant so, up to a rotation, we can assume that $\bx-\by=-t\be_1$, $t\neq 0$, where $\{\be_1,\dotsc ,\be_m\}$ is the canonical basis of $\bR^m$. Hence
 \[
  \pa_{x_j}\pa_{y_k}\bsK_\ga (\bx-\by) =\int_{\bR^m} e^{\ii t\xi_1} \xi_j\xi_k \mu_\ga \lb  d\xi\rb.
 \]
Let us observe that if $j\neq k$, then  either $j\neq 1$, or $k\neq 1$.  Suppose $j\neq 1$. The function  $e^{\ii t\xi_1} \xi_j\xi_k$ is odd with respect to the reflection $\xi_j\mapsto -\xi_j$ so
\[
\pa_{x_j}\pa_{y_k}\bsK_\ga (\bx,\by) =\int_{\bR^m} e^{\ii t\xi_1} \xi_j\xi_k \mu_\ga \lb  d\xi\rb=0,\;\;\forall j\neq k.
\]
If $j=k$, then
\[
d_m(j):=\pa_{x_j}\pa_{y_j}\bsK_\ga (\bx,\by)= \int_{\bR^m} e^{\ii t\xi_1} \xi_j^2\mu_\ga \lb  d\xi\rb=  \int_{\bR^m} \cos (t\xi_1) \xi_j^2\mu_\ga \lb  d\xi\rb
\]
and we deduce\footnote{At this point we use the fact that $\ga\lp |\xi|\rp >0$ for $|\xi|$ sufficiently small.}
\[
|v_m(j)|\leq  \int_{\bR^m} \lv \cos (t\xi_1)\rv \xi_j^2\mu_\ga \lb  d\xi\rb<  \int_{\bR^m}  \xi_j^2\mu_\ga \lb  d\xi\rb=d_m.
\]
After a reordering  
\[
\begin{array}{c}
\lp \pa_{x_1}\Phi_\ga(\bx),\dotsc,\pa_{x_m}\Phi_\ga(\bx),\pa_{y_1}\Phi_\ga(\by),\dotsc \pa_{y_m}\Phi_\ga(\by)\rp\\
\downarrow\\
\lp \pa_{x_1}\Phi_\ga(\bx),\pa_{y_1}\Phi_\ga(\by),\dotsc,\pa_{x_m}\Phi_\ga(\bx),\pa_{y_m}\Phi_\ga(\by)\rp
\end{array}
\]
we see that 
\[
\var\lb \nabla\hphi (\bx,\by)\rb =\bigoplus_{j=1}^m\underbrace{\left[
\begin{array}{cc}
d_m & d_m(j)\\
d_m(j) & d_m
\end{array}
\right]}_{=:V_j}.
\]
Note that, for each $j$, the symmetric matrix $V_j$ is positive definite since  
\[
\det V_j=d_m^2-d_m(j)^2>0.
\]

\end{proof}
Since $\nabla \hPhi(\bx,\by)$ is nondegenerate   for $\bx\neq \by$ we can use the Kac-Rice formula to compute  $\bE\lb  \hat{\fC}^R[\bsI_{\eX} f_R^{\boxtimes 2}]\rb$.  We  deduce  that  for any $R>0$
 \begin{equation}\label{KR_cov_1aa}
\begin{split}
\bE\lb  \hat{\fC}[\bsI_{\eX} f_R^{\boxtimes 2}]\rb\hspace{5cm}\\
=  \int_{\bR^m\times \bR^m\setminus\Delta } \underbrace{\bE\lb \vert\det \hh(\bx,\by)\vert \rv \nabla \hphi(\bx,\by)=0\rb p_{\nabla\hphi(\bx,\by)}(0)}_{=\hrho(\bx,\by)} f_R^{\boxtimes 2}(\bx,\by)\blam\lb d\bx d\by\rb.
\end{split}
\end{equation}
Since $\hPhi$ is invariant under the translations (\ref{T_v}) we deduce that $\hrho$ depends only on $\bx-\by$. 

The gradient $\nabla\tphi(\bx,\by)$ is nondegenerate for any $\bx,\by$ and invoking Kac-Rice again we obtain
\begin{equation}\label{KR_cov_1aat}
\begin{split}
\bE\lb  \tilde{\fC}[\bsI_{\eX} f_R^{\boxtimes 2}]\rb\hspace{5cm}\\
=  \int_{\bR^m\times \bR^m\setminus\Delta } \underbrace{\bE\lb \vert\det \tH(\bx,\by)\vert \rv \nabla \tphi(\bx,\by)=0\rb p_{\nabla\tphi(\bx,\by)}(0)}_{=\trho(\bx,\by)} f_R^{\boxtimes 2}(\bx,\by)\blam\lb d\bx d\by\rb.
\end{split}
\end{equation}
The function  $\trho_R(\bx,\by)$ is independent of $\bx,\by$ since the random function $\tphi$  is stationary. Thus
\begin{equation}\label{KR_cov_2_per}
\begin{split}
I(R)=\int_{\eX}\lp \underbrace{\hrho(\bx,\by)-\trho(\bx,\by)\rp}_{=:\Delta(\bx,\by)} f_R(\bx)f_R(\by)\blam\lb d\bx d\by\rb.
\end{split}
 \end{equation}

There is a serious issue  concerning $\hrho(\bx,\by)$ namely  it blows up as $(\bx,\by)$ approaches the diagonal so this Kac-Rice density may not be locally integrable.

\subsection{Off-diagonal  behavior}   We  first describe the behavior of $\Delta(\bx,\by)$ away from the diagonal. Note that $\Delta$ depends only on the $\bx-\by$.

 For every $\bz\in \bR^m$ we set 
 \[
 T(z):=\sum_{|\alpha|\leq 4}\lv \pa^\alpha \bsK_\ga(\bz)\rv.
 \]
 Since $\bsK_\ga$ is a Schwartz  function we deduce  that
 \[
 T(\bz)= O\lp |\bz|^{-\infty}\rp\;\;\mbox{as $|\bz| \to \infty$}.
 \]
 This means that
 \[
 \forall p>0,\;\; T(\bz)= O\lp |\bz|^{-p}\rp\;\;\mbox{as $|\bz| \to \infty$}.
 \]
 Observe that
 \[
 \var\lb \nabla \tphi(\bx,\by)\rb=\left[
 \begin{array}{cc}
 \var\lb \nabla\Phi_\ga(\bx)\rb  & 0\\
 0 & \var\lb \nabla\Phi_\ga(\by)\rb
 \end{array}
 \right]=d_m\one_{2m},
 \]
 and
 \[
  \var\lb \nabla \hphi(\bx,\by)\rb=\left[
 \begin{array}{cc}
 \var\lb \nabla\Phi_\ga(\bx)\rb  & \cov\lb \nabla\Phi_\ga(\bx),\nabla\Phi_\ga(\by)\rb\\
 & \\
 \cov\lb \nabla\Phi_\ga(\by),\nabla\Phi_\ga(\bx)\rb & \var\lb \nabla\Phi_\ga(\by)\rb
 \end{array}
 \right]
 \]
 \[
 = \var\lb \nabla \tphi(\bx,\by)\rb+ \underbrace{\left[
 \begin{array}{cc}
0& \cov\lb \nabla\Phi_\ga(\bx),\nabla\Phi_\ga(\by)\rb\\
 & \\
 \cov\lb \nabla\Phi_\ga(\by),\nabla\Phi_\ga(\bx)\rb &0 
 \end{array}
 \right]}_{=: R_\nabla(\bx,\by)}.
 \]
Hence
 \begin{equation}\label{KR_cov_3a_per}
 \lV  \var\lb \nabla \hphi(\bx,\by)\rb-  \var\lb \nabla \tphi(\bx,\by)\rb\rV_\op=\Vert R_\nabla(\bx,\by)\Vert_\op=O\lp T_R(\bx-\by)\rp,
\end{equation}  
The operators $\var\lb \nabla \hphi(\bx,\by)\rb$  and $\var\lb \nabla\tphi(\bx,\by)\rb$  are  invertible  for $\bx\neq \by$. In particular
 \begin{equation}\label{invargrad}
\begin{split}
\var\lb   \nabla\hphi(\bx,\by)\rb^{-1}=\Lp  \var\lb   \nabla\tphi(\bx,\by)\rb+R_\nabla(\bx,\by)\Rp^{-1}\\
=\Lp \one + \var\lb   \nabla\tphi(\bx,\by)\rb^{-1}R_\nabla(\bx,\by)\Rp^{-1}\var\lb   \nabla\tphi(\bx,\by)\rb^{-1},
\end{split}
\end{equation}
 \begin{equation}\label{var_grad}
  \lV  \var\lb \nabla \hphi(\bx,\by)\rb^{-1}-  \var\lb \nabla \tphi(\bx,\by)\rb^{-1}\rV_\op=O\lp T(\bx-\by)\rp \;\mbox{as $|\bx-\by|\to\infty$}.
   \end{equation}
Note that
\[
\var\lb \tH(\bx, \by)\rb =\left[\begin{array}{cc}
\Var\lb  H(\bx)\rb & 0\\
0 & \var\lb H(\by)\rb
\end{array}
\right].
\]
Since $\Phi_\ga$ is stationary,  $\var\lb \tH(\bx, \by)\rb$ is \emph{independent} of $\bx$ and $\by$. We have
\[
\var\lb \hh(\bx, \by)\rb =\left[\begin{array}{cc}
\Var\lb  H(\bx)\rb & \cov\lb H(\bx),H(\by)\rb\\
& \\
\cov \lb H(\by), H(\bx)\rb & \var\lb H(\by)\rb
\end{array}
\right]
\]
\[
= \var\lb \tH(\bx, \by)\rb+ \underbrace{\left[\begin{array}{cc}
0 & \cov\lb H(\bx),H(\by)\rb\\
& \\
\cov \lb H(\by), H(\bx)\rb & 0
\end{array}
\right]}_{=: R_H(\bx,\by)}.
\]
We deduce  that as $\vert\bx-\by\vert\to\infty$ we have
 \begin{equation}\label{KR_cov_3}
 \lV  \var\lb \hh(\bx, \by)\rb-\var\lb \tH(\bx, \by)\rb\rV_\op=\Vert R_H(\bx,\by)\Vert_\op = O\lp T(\bx-\by)\rp.
 \end{equation}
 We denote by  $ \tH(\bx,y)^\flat$ the Gaussian random matrix 
 \[
  \tH(\bx,\by)^\flat= \tH(\bx,\by)-\bE\lb  \tH(\bx,\by)\cond   \nabla\tphi(\bx,\by\rb.
  \]
   Similarly, we denote by  $ \hh(\bx,\by)^\flat$ the Gaussian random matrix 
   \[
  \hh(\bx,y)^\flat=\hh(\bx,\by)-\bE\lb \hh(\bx,\by)\cond, \nabla\hPhi\rb.
  \]
 The distributions of   $ \tH(\bx,\by)^\flat$ and $ \hh(\bx,\by)^\flat$ are determined by the Gaussian  regression formula (\ref{reg2}).
 
 Since  $ \tH(\bx,y)$ and  $\nabla\tphi(\bx,\by)$  are independent we deduce 
 \[
 \Var\lb  \tH(\bx,\by)^\flat\rb= \Var\lb  \tH(\bx,\by)\rb.
 \]
 Using the regression formula we deduce  that for $\vert\bx-\by\vert>C_0$,
 \[
 \begin{split}
 \Var\lb \hh(\bx,\by)^\flat\rb=\var\lb \hh(\bx,\by)\rb\hspace{3cm}&\\
  -\cov\lb  \hh(\bx,\by), \nabla\hphi(\bx,\by)\rb \var\lb   \nabla\hphi(\bx,\by)\rb^{-1}\cov\lb   \nabla\hphi(\bx,\by),\hh(\bx,\by)\rb &|\\
  =\var\lb \tH(\bx,\by)^\flat\rb+R_H(\bx,\by)\hspace{5cm}&\\
  -\cov\lb  \hh(\bx,\by), \nabla\hphi(\bx,\by)\rb \var\lb   \nabla\hphi(\bx,\by)\rb^{-1}\cov\lb  \nabla\hphi(\bx,\by),\hh(\bx,\by)\rb. &
  \end{split}
  \]
We have
\[
\cov\lb  \hh(\bx,\by), \nabla\hphi(\bx,\by)\rb=\left[
\begin{array}{cc}
\cov\lb H(\bx),\nabla\Phi_\ga(\bx)\rb &  \cov\lb H(\bx),\nabla\Phi_\ga(\by)\rb\\
&\\
\cov\lb H(\by),\nabla\Phi_\ga(\bx)\rb &  \cov\lb H(\by),\nabla\Phi_\ga(\by)\rb
\end{array}
\right]
\]
\[
= \left[
\begin{array}{cc}
\cov\lb0 &  \cov\lb H(\bx),\nabla\Phi_\ga(\by)\rb\\
&\\
\cov\lb H(\by),\nabla\Phi_\ga(\bx)\rb &  0
\end{array}
\right].
\]
This implies
\[
\cov\lb  \hh(\bx,\by), \nabla\hphi(\bx,\by)\rb=O\lp T(\bx-\by)\rp\;\;\mbox{as $\vert\bx-\by\vert\to\infty$}.
\]
 We deduce  from (\ref{var_grad})  that
\[
\var\lb   \nabla\hphi(\bx,\by)\rb^{-1}= \var\lb   \nabla\tphi(\bx,\by)\rb^{-1} +O\lp T(\bx-\by)\rp.
\]
Since $\var\lb\nabla \tphi(\bx,\by)\rb$ is independent of $\bx$ and $\by$ we conclude that
\begin{equation}\label{KRc_cov4}
\cov\lb  \hh(\bx,\by), \nabla\hphi(\bx,\by)\rb \var\lb   \nabla\hphi(\bx,\by)\rb^{-1}\cov\lb  \nabla\hphi(\bx,\by),\hh(\bx,\by)\rb=O\lp T(\bx-\by)\rp,
\end{equation}
Since $\var\lb  \tH(\bx,\by)\rb$  is independent of $\bx,\by$   we deduce that there exists $\mu_0>0$ such that
\[
\var\lb  \tH(\bx,\by)^\flat\rb\geq \mu_0\one,\;\;\forall \bx\neq \by.
\]
We  deduce from (\ref{KRc_cov4}) and Lemma  \ref{lemma: cont_gauss_int}   that
\begin{equation}\label{KR_cov_5}
\Lv \bE\lb |\det \hh(\bx,\by)^\flat|\rb- \bE\lb |\det \tH(\bx,\by)^\flat|\rb\Rv= O\lp T(\bx-\by)^{1/2}\rp.
\end{equation}
Using (\ref{var_grad}) we deduce that as $|\bx-\by|\to\infty$ we have
\begin{equation}\label{KR_cov_5a}
\begin{split}
\Lv p_{ \nabla\hphi(\bx,\by)}(0)- p_{\nabla\tphi(\bx,\by)}(0)\rv&\\
=(2\pi)^{-m/2}\Lv \det \var\lb   \nabla\hphi(\bx,\by)\rb^{-1}-\det\var\lb   \nabla\tphi(\bx,\by)\rb^{-1}\Rv&=O\lp  T(\bx-\by)\rp.
\end{split}
\end{equation}
Note also that (\ref{KR_cov_3}) implies that 
\begin{equation}\label{sup_h}
\sup_{ \vert\bx- \by\vert_\infty>1}\Vert \var\lb \hh(\bx,\by)^\flat\rb\Vert_\op <\infty
\end{equation}
We can now estimate the right-hand-side of (\ref{KR_cov_2_per}). Using (\ref{KR_cov_5}), (\ref{KR_cov_5a}) and  (\ref{sup_h}) 
 we conclude that
 \begin{subequations}
 \begin{equation}
 \label{tildeKR}
\forall \vert\bx-\by\vert>1,\;\;\lv \Delta(\bx,\by)\rv=  O\lp \lv\bx-\by\rv^{-\infty}\rp,\;\;\mbox{as $|\bx-\by|\to\infty$}. 
 \end{equation}
 \begin{equation}\label{tildeKRa}
 \sup_{|\bx-\by|>1}\lv \Delta (\bx-\by)\rv<\infty.
 \end{equation}
 \end{subequations}
 
  \subsection{Conclusion}  Suppose that 
  \[
  \supp f\subset\{ |\bx|\leq_0\}.
  \]
   Denote by $\widehat{\eX}$ the  radial-blowup of $\bR^m\times \bR^m$ along the diagonal.  It is diffeomorphic to the product  $\bR^m \times S^{m-1}\times [0,\infty)$.

  Choose new orthogonal  coordinates $(\xi,\eta)$  given  by 
  \[
  \xi=\bx+\by,\;\;\eta=\bx-\by\,\Llra\, \bx=\frac{1}{2}(\xi+\eta),\;\;\by=\frac{1}{2}(\xi-\eta)
  \]
  then
  \[
  |\bx-\by|=|\eta|,\;\;d\bx d\by=2^{-2m} d\xi d\eta.
  \]
  Recall that $\hrho$ depends only on $\eta$. Note that if $\bx,\by\in \supp f$, then  $|\bx|,|\by|<r_0$ and  thus
  \begin{equation}\label{suppfR}
 \bx,\by\in \supp f\,\Ra\, |\xi|,\;|\eta|<\frac{1}{2}|\xi+\eta|+\frac{1}{2}|\xi-\eta|=|\bx|+|\by|\leq 2r_0.
  \end{equation}
  The natural projection $\pi:\widehat{\eX}\to\bR^m\times \bR^m$ can  given the explicit  description
  \[
  \bR^m \times S^{m-1}\times [0,\infty)\ni (\xi, \bnu,r)\mapsto (\xi,\eta)=(\xi, r\bnu)\in\bR^m\times \bR^m.
  \]
  We set
  \[
 w(\bx,\by)= |\bx-\by|^{m-2} \hrho(\bx,\by).
  \]
 
  We deduce from  \cite[Sec. 4.2]{BMM22} or  \cite[Appendix A.1]{EL}
  \begin{equation}\label{sup_wR}
 \sup_{0<|\bx-\by|\leq 1} \lv w(\bx,\by)\rv <\infty.
 \end{equation}
  It is easy to see that $\trho\circ \pi$ admits a continuous extension to the blow-up. Using (\ref{tildeKR}), (\ref{tildeKRa}) and (\ref{sup_wR}) we deduce that for any $p>0$ there exists a constant $K_p>0$, such that
  \begin{equation}\label{sup_deR}
  |x-y|^{m-1} \lv \Delta(\bx,\by)\rv \leq K_p\lp 1+|x-y|\rp^{-p+m-1},\;\;\forall \bx\neq \by
  \end{equation}
  Set
  \[
 \delta(\xi,\eta)= \Delta\lp \pi(\xi,\eta)\rp
 \]
Since $\Delta(\bx,\by)$ depends only on $\by-\bx$ we deduce that $\delta(\xi,\eta)$ is independent of $\xi$.   We have
  \[
 I(R)= \int_{\eX}\Delta(\bx,\by) f_R^{\boxtimes 2}(\bx,\by) d\bx d\by 
  \]
  \[
   =\frac{1}{2^{2m}}\int_{\bR^m}\int_{ |\bnu|=1,\,r\in(0,\infty)}r^{m-1}\delta \lp  \xi,r\bnu\rp f _R\Lp  \frac{\xi+r\bnu}{2}\Rp f_R\Lp\frac{\xi-r\bnu}{2}\Rp dr\vol_{S^{m-1}}[d\bnu] d\xi
  \]
  ($\xi=2R\zeta$)
  \[
 \stackrel{(\ref{suppfR})}{=}R^m\underbrace{ \int_{|\zeta\leq 2r_0}2^{-m}\left(\int_{\substack{|\bnu|=1\\
r>0}}r^{m-1} \delta \lp 0 ,r\bnu\rp f \Lp  \zeta+\frac{r\bnu}{2R}\Rp f(\zeta-\frac{r\bnu}{2R}\Rp dr\vol_{S^{m-1}}[d\bnu]\right)d\zeta}_{=:J(R)}.
 \]
 We deduce  from (\ref{suppfR}) and (\ref{sup_wR}) that  for any $p>0$  there exists $K_p>0$ such that for any $R>0$ ,  $|\bnu|=1$ and $r>0$  we have
 \[
 \Lv r^{m-1} \delta \lp 0 ,r\bnu\rp f \Lp  \zeta+\frac{r\bnu}{2R}\Rp f\Lp \zeta-\frac{r\bnu}{2R}\Rp \Rv\leq K_p\Vert f\Vert^2\lp 1+r\rp^{-p+m-1}.
 \]
  For $p>m$ we have
 \[
  \int_{|\zeta|\leq 2r_0}\left(\int_{(0,\infty\times S^{m-1}} \lp 1+r\rp^{-p+m-1} dr \vol_{S^{m-1}}[d\bnu]\right)d\zeta<\infty.
  \]
  The dominated  convergence theorem implies  that $J(R)$ has a finite limit  as $R\to\infty$.  More precisely
  \[
  \lim_{R\to\infty}J(R)= \int_{|\zeta|\leq2 r_0}\underbrace{\left(2^{-m}\int_{\substack{|\bnu|=1\\
r>0}}r^{m-1} \delta  \lp 0 ,r\bnu\rp  dr\vol_{S^{m-1}}[d\bnu]\right)}_{=:Z_m(\ga)}\; f(\zeta)^2 d\zeta.
\]
This concludes the proof of Theorem \ref{th: var_asy} (ii) with $V_m(\ga)= Z_m(\ga)+C_m(\ga)$.

   \appendix
   
   \section{Random measures}\label{s: ergodic}  The fact that $\Phi_\ga$ is a stationary random function implies  that the random measure $\fC_\ga$  is stationary. This  in itself has deep consequences. Let us elaborate.

 Denote by $\widehat{\Meas}(\bR^m)$ \index{$\widehat{\Meas}(\bR^m)$}the space  locally finite of Borel measures\index{measure! locally finite}  $\mu$ on $\bR^m$,  i.e.,  $\mu\lb B\rb<\infty$ for any bounded  Borel  subset $B\subset \bR^m$.  Each $f\in C^0_\cpt(\bR^m)$  defines a  map
  \[
  I_f:  \widehat{\Meas}(\bR^m)\to \bR,\;\; \mu\mapsto I_f(\mu)=\mu\lb f\rb:=\int_{\bR^m} f(\bx)\mu\lb dx\rb.
  \]
  The  \emph{vague topology} \index{vague! topology} on  $\widehat{\Meas}(\bR^m)$ is the smallest topology such that all the functions $I_f$, $f\in C_\cpt^0(\bR^m)$ are continuous.  As shown in \cite[Thm. 4.2]{Kalle_RM}, this topology is Polish, i.e., it is induced  by a complete  and separable  metric. We denote by $\eM$ is metric space.  T  We denote by $\Prob\lp \eM\rp$ the space of Borel  probability measures on $\eM$.
  
  A sequence $(\mu_n)$  in $\eM$ converges vaguely to $\mu \in \eM$, and we indicate  this as $\mu_n\stackrel{v}{\to}\mu$,  if and only if
  \[
  \mu_n\lb f\rb \to \mu\lb f\rb,\;\;\forall f\in C_\cpt^0(\bR^m).
  \]
  A random locally finite  measure \index{random measure! locally finite} on $\bR^m$ is a Borel measurable map
  \[
 \fM: \lp\Omega,\eS,\bP\rp\to \eM.
 \]
 Its distribution is a Borel probability measure $\bP_{\fM}$ on $\eM$.    
  
 Recall that a sequence of  probability measures $\mu_n\in \Prob(\eM)$ is said to converge weakly  to $\mu\in \Prob(\eM)$, and we indicate this $\mu_n\to\mu$  if
 \[
 \lim_{n\to \infty} \int_{\eX}Fd\mu_n=\int_{\eX} F d\mu,
 \]
  for any bounded and continuous function $F:\eM\to\bR$.  A sequence of random measures $\fM_n$ is said to converge weakly to the random measure  $\fM$ if the distributions $\bP_{\fM_n}$ converge weakly in  $\Prob(\eM)$ to $\bP_{\fM}$. We  use the notation $\fM_n\to\fM$ to indicate this.   
 
 A subset $Q\subset \bR^m$  is  a \emph{quasi-box} \index{quasi-box}  if it is  a product  of finite intervals
  \[
  Q=I_1\times \cdots \times I_m.
  \]
  The intervals $I_k$ need not be closed and could have length zero.  A quasi-box $Q$ is  called a box if all the intervals $I_k$ are closed and have nonzero lengths.

  We have the following result, \cite[Prop.11.1.VIII]{DaVere2}, \cite[Thm. 4.11]{Kalle_RM}.

  \begin{theorem}  Consider a  sequence $\lp \fM_n\rp_{n\in\bN}$  of random locally finite measures $\Prob\lp \eM\rp$. The following are equivalent.
  
  \begin{enumerate}
  
  \item  The sequence $\fM_n$ converges weakly to the  random locally finite measure  $\fM$.
  
  \item For any  $f\in C^0_\cpt(\bR^m)$, the random variables $\fM_n\lb f\rb$ converge in distribution to $\fM\lb f\rb$.
  
  \item For any quasi-box $Q\subset \bR^m$  the random variables $\fM_n\lb Q\rb$  converge in distribution to $\fM\lb Q \rb$.
  
  \item For any bounded Borel subset $S\subset \bR^m$ random variables $\fM_n\lb S\rb$  converge in distribution to $\fM\lb S \rb$
  \end{enumerate}
  \qed
  \end{theorem}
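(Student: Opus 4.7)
The plan is to prove the cycle $(1)\Rightarrow (2)\Rightarrow (4)\Rightarrow (3)\Rightarrow (2)\Rightarrow (1)$, with the conceptual core being the equivalence $(1)\Leftrightarrow (2)$; assertions $(3)$ and $(4)$ will then follow by sandwiching indicators of quasi-boxes and bounded Borel sets between continuous compactly supported test functions.

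The implication $(1)\Rightarrow (2)$ is immediate from the continuous mapping theorem, since each evaluation $I_f:\eM\to\bR$ is continuous by the very definition of the vague topology, so weak convergence $\bP_{\fM_n}\to\bP_{\fM}$ on $\eM$ pushes forward under $I_f$ to weak convergence on $\bR$. For $(2)\Rightarrow (4)$, I would first upgrade $(2)$ to joint finite-dimensional convergence: by linearity of each random measure, condition $(2)$ applied to $a_1 f_1+\cdots+a_k f_k\in C^0_\cpt(\bR^m)$ together with the Cram\'er--Wold device forces $(\fM_n[f_1],\dots,\fM_n[f_k])\to(\fM[f_1],\dots,\fM[f_k])$ jointly in distribution on $\bR^k$. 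Given a bounded Borel $S$, outer and inner regularity of the locally finite Radon intensity measure $\bE\fM$ produce, for each $\varepsilon>0$, test functions $f^{\pm}_\varepsilon\in C^0_\cpt(\bR^m)$ with $f^{-}_\varepsilon\leq\bsI_S\leq f^{+}_\varepsilon$ pointwise and $\bE\fM[f^{+}_\varepsilon-f^{-}_\varepsilon]<\varepsilon$. Joint convergence of $(\fM_n[f^{-}_\varepsilon],\fM_n[f^{+}_\varepsilon])$, the almost-sure sandwich $\fM_n[f^{-}_\varepsilon]\leq\fM_n[S]\leq\fM_n[f^{+}_\varepsilon]$, and a diagonal extraction as $\varepsilon\to 0$ deliver $\fM_n[S]\to\fM[S]$ in distribution. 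The implication $(4)\Rightarrow (3)$ is trivial since every quasi-box is a bounded Borel set, and $(3)\Rightarrow (2)$ follows by uniformly approximating each $f\in C^0_\cpt$ by step functions on a fine quasi-box partition of $\supp f$ and invoking Slutsky's theorem.

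The crucial step is $(2)\Rightarrow (1)$, for which the natural tool is the Laplace functional $L_\fM(f):=\bE\bigl[\,e^{-\fM[f]}\,\bigr]$ defined for nonnegative $f\in C^0_\cpt(\bR^m)$. Since $x\mapsto e^{-x}$ is bounded and continuous on $[0,\infty)$, hypothesis $(2)$ together with bounded convergence yields $L_{\fM_n}(f)\to L_\fM(f)$ pointwise. Kallenberg's theorem \cite[Thm.~4.11]{Kalle_RM} asserts that on $\eM$ the Laplace functional uniquely determines the law of a random locally finite measure, and that pointwise convergence of Laplace functionals on the nonnegative cone of $C^0_\cpt(\bR^m)$ combined with tightness of $(\bP_{\fM_n})$ in $\Prob(\eM)$ is equivalent to weak convergence $\bP_{\fM_n}\to\bP_\fM$. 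Tightness reduces to real-valued tightness of $\{\fM_n[g_k]\}$ for a countable exhausting family of bumps $g_k\in C^0_\cpt$, which is automatic since convergence in distribution of $\fM_n[g_k]$ to the $\as$ finite $\fM[g_k]$ rules out escape of mass at infinity. The principal obstacle lies precisely in this step: elevating one-dimensional marginal convergence to weak convergence on the infinite-dimensional Polish space $\eM$ requires Kallenberg's Laplace functional characterization rather than any elementary continuous mapping argument, and some additional care is needed at boundaries in $(3)$ and $(4)$, where the cleanest formulations in \cite{DaVere2,Kalle_RM} implicitly restrict to stochastic continuity sets $S$ with $\fM[\partial S]=0$ $\as$.
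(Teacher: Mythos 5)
The paper does not actually prove this theorem: it is quoted from the literature with pointers to \cite[Prop. 11.1.VIII]{DaVere2} and \cite[Thm. 4.11]{Kalle_RM}, so there is no in-paper argument to compare yours against. Your sketch follows the standard textbook route (continuous mapping for $(1)\Rightarrow(2)$, Laplace functionals plus tightness for $(2)\Rightarrow(1)$, sandwiching for the set-indexed versions), which is indeed how Kallenberg proceeds; but two steps are genuine gaps rather than deferrable details.

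First, the sandwich in $(2)\Rightarrow(4)$ cannot produce $f^{\pm}_\ve\in C^0_\cpt(\bR^m)$ with $f^{-}_\ve\leq \bsI_S\leq f^{+}_\ve$ and $\bE\fM[f^{+}_\ve-f^{-}_\ve]<\ve$ for an \emph{arbitrary} bounded Borel set $S$: continuity forces $f^{+}_\ve\geq \bsI_{\cl S}$ and $f^{-}_\ve\leq \bsI_{\inte S}$, hence $\bE\fM[f^{+}_\ve-f^{-}_\ve]\geq \bE\fM[\pa S]$, which need not be small. This is not something to flag in a closing caveat; without the restriction to stochastic continuity sets ($\fM[\pa S]=0$ a.s.), statements (iii) and (iv) are false as literally written (take $\fM_n=\delta_{1/n}$, $\fM=\delta_0$, $S=\{0\}$), so the cycle you propose cannot close for the literal statement. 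The cited sources state the set-indexed conditions only for continuity sets, and the theorem must be read that way.

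Second, $(3)\Rightarrow(2)$ by step-function approximation requires convergence in distribution of $\sum_i c_i\fM_n[Q_i]$, hence \emph{joint} convergence of $(\fM_n[Q_1],\dotsc,\fM_n[Q_k])$, whereas hypothesis (3) gives only the one-dimensional marginals. Cram\'er--Wold is unavailable here because a linear combination of indicators of quasi-boxes is not the indicator of a quasi-box (your use of Cram\'er--Wold for test functions in $(2)\Rightarrow(4)$ is fine, since linear combinations of $C^0_\cpt$ functions stay in $C^0_\cpt$). Upgrading one-dimensional set convergence to joint convergence is precisely the delicate content of \cite[Thm. 4.11]{Kalle_RM}; it goes through the Laplace functional and finite additivity over a dissecting semiring, not through Slutsky. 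The remaining steps --- $(1)\Rightarrow(2)$ by continuous mapping, $(4)\Rightarrow(3)$, and the tightness argument in $(2)\Rightarrow(1)$ --- are correct.
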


  There are other   modes of convergence  of random measures corresponding to the various modes of convergence of random  variables.   Suppose that
  \[
  \fM_n,\;\fM:\lp \Omega,\eS,\bP\rp\to \Prob\lp \eX\rp,\;\;n\in \bN
  \]
  are random   locally finite measures. We say that $\fM_n$ converges almost surely to $\fM$ and we indicate this $\fM_n\stackrel{\as}{\to} \fM$ if  there exists a $\bP$-negligible  set $\eN\in \Omega$ such that
  \[
  \fM_n(\omega) \stackrel{v}{\ra} \fM(\omega),\;\;\forall \omega\in \Omega\setminus \eN,
  \]
   i.e., 
  \[
  \fM_n\stackrel{\as}{\Lra}\fM\,\Llra\,  \fM_n\lb f\rb \stackrel{\as}{\Lra}\fM\lb f\rb,\;\forall f\in C^0_\cpt(\bR^m),.
  \]
  The convergence $\fM_n\stackrel{L^p}{\Lra}\fM$ is  defined in a similar  fashion namely 
   \[
  \fM_n\stackrel{L^p}{\Lra}\fM\,\Llra\,  \fM_n\lb f\rb \stackrel{L^p}{\Lra}\fM\lb f\rb,\;\forall f\in C^0_\cpt(\bR^m),.
  \]
One can show (see \cite[Lemma 4.8]{Kalle_RM}) that
\begin{subequations}
\begin{equation}\label{rand_meas_a}
 \fM_n\stackrel{\as}{\Lra}\fM\,\Llra\,  \fM_n\lb S\rb \stackrel{\as}{\Lra}\fM\lb S \rb,\;\;\mbox{for any bounded Borel set  $S\subset \bR^m$},
 \end{equation}
 \begin{equation}\label{rand_meas_b}
 \fM_n\stackrel{L^p}{\Lra}\fM\,\Llra\,  \fM_n\lb S\rb \stackrel{L^p}{\Lra}\fM\lb S \rb,\;\;\mbox{for any bounded Borel set  $S\subset \bR^m$}.
 \end{equation}
 \end{subequations}
   
  The action of $\bR^m$ on itself by translations  induces an action  on  $\eM$.   We denote by $\eI$ the sigma-subalgebra  of $\eB_{\eM}$ consisting  of  translation invariant Borel subsets of $\eM$.  A measure $\bP\in \Prob\lp \eM\rp$ is called stationary if its invariant  with respect to this action.
  A random measure  $\fM$ is called stationary  \index{random measure! stationary}  if its distribution $\bP_{\fM}$ is stationary.
  
  As discussed in \cite[Chap.12]{DaVere2} or \cite[Chap.5]{Kalle_RM}, every stationary  random locally finite   measure $\fM$ on $\bR^m$ has an asymptotic  intensity $\hat{\fM}\in L^1\lp \eM, \bP_\eM)$. More precisely
   \[
\widehat{\fM}:=\bE\lb \fM[C_1]\,\lv \eI\rb,\;\;C_1=[0,1]^m.
\]
This is an  integrable random variable $\widehat{\fM}$.    It has an ergodic interpretation. Wiener's ergodic theorem  shows  that  for  any compact convex subset $C\subset \bR^m$ containing the origin in the interior we have
\[ 
\widehat{\fM}= \lim_{N\to\infty}\frac{1}{N^m\vol\lb C\rb}\int_{NC}\fM[C_1-\bx\rb d\bx \;\;\as.
\]
 Thus, if the action of $\bR^m$ on $(\eM,\bP_{\fM})$ is ergodic,  then $\widehat{\fM}$ is $\bP_{\fM}$-$\as$ constant.

The intensity $\widehat{\fM}$ has another  ergodic interpretation; see \cite[Thm. 12.2.IV]{DaVere2} or  \cite[Th. 5.23]{Kalle_RM}. More precisely, for $C$ as above we have
\[
\frac{1}{N^m  \vol\lb C\rb} \fM\lb NC\rb\to \widehat{\fM}
\]
$\as$ and $L^1$. Moreover, if $\fM[C_1]\in L^p$, then the convergence holds also in $L^p$. This fact admits the following generalization.

A sequence $\vfi_N\in C^0_\cpt(\bR^m)$, $N\in \bN$ is called \emph{asymptotically} stationary if there exists $C>0$ such that
\[
\vfi_N\geq 0,\;\;\int_{\bR^m}\vfi_N(\bx) d\bx=C,\;\;\forall N,
\]
and 
\[
\lim_{N\to\infty}\int_{\bR^m}\lv \vfi_N(\bx)-\vfi_N(\bx-\by)\rv d\bx =0,\;\;\forall \by\in \bR^m.
\]
We have the following  result, \cite[Thm. 5.24]{Kalle_RM} 
\begin{theorem}\label{th: asy_sta} If  $(\vfi_N)_{N\in \bN}$ is asymptotically stationary, then
\[
 \fM\lb \vfi_N\rb\to C \widehat{\fM},
 \]
 in $L^1$ and $\as$.\qed
 \end{theorem}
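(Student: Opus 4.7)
The plan is a two-parameter approximation that inserts between $\fM[\vfi_N]$ and $C\widehat{\fM}$ a smoothed proxy $\fM[\vfi_N * \chi_R]$ with $\chi_R = R^{-m}\bsI_{[0,R]^m}$, and controls the two resulting differences by asymptotic stationarity (sending $N\to\infty$ with $R$ fixed) and by Wiener's ergodic theorem (sending $R\to\infty$), respectively. Rescaling, we may assume $C = 1$. Set $\lambda := \bE\fM[C_1]$; stationarity then makes the intensity measure of $\fM$ equal to $\lambda$ times Lebesgue measure.

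For the $L^1$ statement, the smoothing error is controlled using $\fM \geq 0$ and the explicit intensity: for every $\by \in \bR^m$,
\[
\bE\bigl|\fM[\vfi_N] - \fM[\vfi_N(\cdot - \by)]\bigr| \leq \lambda\int_{\bR^m}|\vfi_N(\bx) - \vfi_N(\bx - \by)|\,d\bx,
\]
which tends to $0$ as $N \to \infty$ by asymptotic stationarity, while remaining bounded by $2\lambda$ for all $\by$. Averaging against $\chi_R(\by)\,d\by$ and invoking dominated convergence on the bounded set $[0,R]^m$ yields $\bE|\fM[\vfi_N] - \fM[\vfi_N * \chi_R]| \to 0$ as $N \to \infty$, for each fixed $R$. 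For the ergodic error, Fubini rewrites
\[
\fM[\vfi_N * \chi_R] = \int_{\bR^m}\vfi_N(\bx)\,Z_R(\bx)\,d\bx,\qquad Z_R(\bx) := \frac{1}{R^m}\fM\bigl[\bx + [0,R]^m\bigr].
\]
Because $\fM$ is stationary and $\widehat{\fM}$ is translation-invariant, $\bE|Z_R(\bx) - \widehat{\fM}|$ does not depend on $\bx$, and Wiener's ergodic theorem (cited just above Theorem \ref{th: asy_sta}) gives $\bE|Z_R(0) - \widehat{\fM}| \to 0$ as $R \to \infty$. Since $\int\vfi_N = 1$,
\[
\bE\bigl|\fM[\vfi_N * \chi_R] - \widehat{\fM}\bigr| \leq \int\vfi_N(\bx)\,\bE|Z_R(\bx) - \widehat{\fM}|\,d\bx = \bE|Z_R(0) - \widehat{\fM}|.
\]
An $\epsilon/2$ argument (choose $R$ large, then $N$ large) completes the $L^1$ convergence.

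For the almost sure convergence the same decomposition is used, but now exploiting the a.s.\ form of Wiener's theorem, which gives $Z_R(\bx) \to \widehat{\fM}$ a.s.\ at each fixed $\bx$. A Fubini argument produces a single event of full $\bP$-measure on which $Z_R(\bx) \to \widehat{\fM}$ holds for Lebesgue-a.e.\ $\bx$, and a Hardy--Littlewood-type maximal inequality for the box averages $\{Z_R\}_{R \geq 1}$ then allows one to push the $R \to \infty$ limit through the integral $\int\vfi_N Z_R\,d\bx$ uniformly in $N$. Combined with an a.s.\ refinement of the smoothing estimate (via diagonal subsequence extraction from the $L^1$ bound, or directly from the maximal inequality), this yields $\fM[\vfi_N] \to \widehat{\fM}$ a.s.\ The main obstacle is precisely this a.s.\ upgrade: because both $N$ and $R$ must tend to infinity while $\vfi_N$ varies with $N$, one cannot simply invoke Wiener's theorem at a single test function, and the maximal ergodic inequality --- which dominates $\sup_R |\fM[\vfi_N] - \fM[\vfi_N * \chi_R]|$ in terms of the $L^1$-modulus of continuity of $\vfi_N$ --- is the technical crux that converts pointwise a.s.\ convergence into uniform-in-$N$ a.s.\ control.
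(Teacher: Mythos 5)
The paper offers no proof of this statement --- it is quoted from Kallenberg \cite[Thm.\ 5.24]{Kalle_RM} --- so there is nothing internal to compare against; I am judging your argument on its own terms. Your $L^1$ half is correct and complete, and it is the standard argument: the intensity bound $\bE\vert \fM[\vfi_N]-\fM[\vfi_N(\cdot-\by)]\vert\le \lambda\int_{\bR^m}\vert\vfi_N(\bx)-\vfi_N(\bx-\by)\vert\, d\bx$, the averaging against $\chi_R$, the Fubini identity $\fM[\vfi_N*\chi_R]=\int\vfi_N(\bx) Z_R(\bx)\,d\bx$, the observation that $\bE\vert Z_R(\bx)-\widehat{\fM}\vert$ is independent of $\bx$ by stationarity and the invariance of $\widehat{\fM}$, and the final $\epsilon/2$ splitting are all sound.

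The a.s.\ half, however, contains a genuine gap, which you have correctly identified as the crux but have not closed. Diagonal subsequence extraction from the $L^1$ smoothing bound yields a.s.\ convergence only along a subsequence of $N$; since the hypotheses give no rate for $\int\vert\vfi_N-\vfi_N(\cdot-\by)\vert\,d\bx\to 0$, Borel--Cantelli cannot upgrade this to the full sequence. The maximal-inequality route also does not deliver what you need: because the supports of the $\vfi_N$ are not uniformly bounded (in the paper's application they grow like $N\cdot\supp f$), the Hardy--Littlewood bound controls $\int\vfi_N(\bx)\sup_R\vert Z_R(\bx)-\widehat{\fM}\vert\,d\bx$ only in $L^1(\bP)$ uniformly in $N$, which is again an in-mean statement, not an almost sure one. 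The obstruction is intrinsic rather than technical: the class of asymptotically stationary sequences contains mollified moving averages $\vfi_N\approx\ell_N^{-1}\bsI_{[a_N,a_N+\ell_N]}$ with, say, $a_N=N$ and $\ell_N=\sqrt{N}$, and by the Akcoglu--del Junco and Bellow--Jones--Rosenblatt theory of moving averages there exist ergodic stationary random measures with bounded density for which such averages diverge almost surely. Hence no argument using only the stated hypotheses can produce the a.s.\ conclusion; to get it one must exploit the special structure of the weights actually used in the paper, namely $\vfi_N=N^{-m}f(\cdot/N)$ with $f$ fixed, for which the dilation (Tempelman-regular) form of Wiener's ergodic theorem applies directly.
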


\end{document}